\newtheorem{thm}{Theorem}[section]
\newtheorem{theorem}[thm]{Theorem}
\newtheorem{prop}[thm]{Proposition}
\newtheorem{cor}[thm]{Corollary}
\newtheorem{rem}[thm]{Remark}
\newtheorem{defi}[thm]{Definition}
\newtheorem{ex}[thm]{Example}
\newtheorem{lem}[thm]{Lemma}
\numberwithin{equation}{section}
\newcommand{\no}{\noindent}
\newcommand{\R}{\mathbb{R}}
\newcommand{\cE}{\mathcal{E}}
\newcommand{\ds}{\displaystyle}
\newcommand{\sm}{\setminus}
\newcommand{\pd}{\partial}
\newcommand{\al}{\alpha}
\newcommand{\Ga}{\Gamma}
\newcommand{\de}{\delta}
\newcommand{\De}{\Delta}
\newcommand{\ep}{\varepsilon}
\newcommand{\la}{\lambda}
\newcommand{\f}{\varphi}
\newcommand{\ome}{\omega}
\newcommand{\Ome}{\Omega}
\renewcommand{\d}{\mathrm{d}}
\renewcommand{\(}{\left(}
\renewcommand{\)}{\right)}
\renewcommand{\lvert}{\left\vert}
\renewcommand{\rvert}{\right\vert}
\renewcommand{\hat}{\widehat}
\renewcommand{\tilde}{\widetilde}
\DeclareMathOperator*{\esssup}{ess\,sup}
\DeclareMathOperator{\interi}{int}
\DeclareMathOperator{\cl}{cl}
\begin{document}

\title{\bf Strict power concavity of a convolution}

\author{\bf Jun O'Hara and Shigehiro Sakata}

\date{\today}

\maketitle

\begin{abstract} 
We give a sufficient condition for the strict parabolic power concavity of the convolution in space variable of a function defined on $\R^n \times (0,+\infty)$ and a function defined on $\R^n$.
Since the strict parabolic power concavity of a function defined on $\R^n \times (0,+\infty)$ naturally implies the strict power concavity of a function defined on $\R^n$, our sufficient condition implies the strict power concavity of the convolution of two functions defined on $\R^n$.
As applications, we show the strict parabolic power concavity and strict power concavity in space variable of the Gauss--Weierstass integral and the Poisson integral for the upper half-space.
\\

\no{\it Keywords and phrases}.
Strict power concavity, the Borell--Brascamp--Lieb inequality, strict parabolic power concavity.\\
\no 2020 {\it Mathematics Subject Classification}: 26B25, 26D15, 90C25, 52A41, 52A40.
\end{abstract}

\maketitle

\section{Introduction}

In this paper, we are interested in the strict power concavity of the convolution,
\begin{equation}\label{f*g}
f \ast g(x)
=\int_{\R^n} f(x-y) g(y) \, \d y ,\
x \in \R^n ,
\end{equation}
of two non-negative measurable functions $f$ and $g$ defined on $\R^n$.

Let us recall the notion of power concavity and its basic properties.
Let $A$ be a convex subset of $\R^n$, $f$ a non-negative function defined on $A$, and $p \in \R \cup \{ \pm \infty \}$. 
$f$ is said to be {\it $p$-concave} on $A$ if, for any $x_0, x_1 \in A$ and $\la \in [0,1]$, the inequality 
\begin{equation}\label{p-concavity}
f \( (1-\la ) x_0 + \la x_1 \) 
\geq M_p \( f \( x_0 \) , f \( x_1 \) ;\la \) 
\end{equation}
holds. 
Here, for $a , b \in [0,+\infty )$ and $\la \in [0,1]$, 
\begin{equation}\label{pth-mean}
M_p \( a,b ;\la \) :=
\begin{cases}
0 &( ab=0) ,\\
\( \( 1- \la \) a^p + \la b^p \)^{1/p} &\(ab >0 ,\ p \notin \{ \pm \infty ,0 \} \) ,\\
\max \{ a , b \} &( ab >0,\ p =+\infty) ,\\
a^{1-\la} b^\la &( ab >0,\ p =0 ),\\
\min \{ a, b\} &( ab >0 ,\ p =-\infty ) 
\end{cases}
\end{equation}
is called the {\it $p$-th mean} of $a$ and $b$ of ratio $\la$.
$f$ is said to be {\it strictly $p$-concave} on $A$ if the inequality \eqref{p-concavity} strictly holds for any distinct $x_0, x_1 \in A$ and $\la \in (0,1)$.

When $f$ is positive on $A$ and $p \in \R$, $f$ is $p$-concave if and only if $x \mapsto f (x)^p$ is concave for $p \in (0,+\infty)$, $x \mapsto  \log f(x)$ is concave for $p =0$, and $x \mapsto f(x)^p$ is convex for $p \in (-\infty , 0)$ (see Subsect. 2.2 for the details).
As a consequence of Jensen's inequality, if $p \geq q$, then, for any $a, b \in [0,+\infty)$ and $\la \in [0,1]$,
\begin{equation}\label{M_monotonicity}
M_p (a,b; \la ) \geq M_q (a,b ; \la)
\end{equation}
holds (see, for example, \cite[Sect. 2.9]{HLP}).
Thus, for any $p \in \R \cup \{ + \infty\}$, $p$-concave (resp. strictly $p$-concave) functions are $-\infty$-concave (resp. strictly $-\infty$-concave).
 
$-\infty$-concavity is also called {\it quasi-concavity}, and we use this terminology hereafter.
It directly follows from definition that any strictly quasi-concave function has at most one global maximum point. 
Furthermore, if $f$ is strictly quasi-concave on $A$, then, for any convex subset $C$ of $A$, the restriction of $f$ to $C$ is strictly quasi-concave on $C$.
Thanks to these properties, strict quasi-concavity plays an important role for optimization problems in, for example, economics.
Namely, for a maximization problem with an objective function $f$, if $f$ is strictly quasi-concave, then we have at most one global optimal solution.

As Gardner explains in \cite[Sect. 11]{Gearly}, the power concavity of a convolution can be derived from the {\it Borell--Brascamp--Lieb inequality} (BBL-inequality, for short). 
The BBL-inequality is an integral inequality (see Theorem \ref{BL} of this paper for the precise statement).
It was shown by Borell \cite[Theorem 3.1]{B75} and by Brascamp and Lieb \cite[Theorem 3.3]{BL}, independently, around the same time. 
The proof of the BBL-inequality can be found in, for example, \cite[Sect. 3.3]{DJD}, \cite{Gup} and \cite{Rino}.
These references include probabilistic applications of the BBL-inequality.

Let us review the process of deriving the power concavity of \eqref{f*g} from the BBL-inequality according to \cite[Sect. 11]{Gearly} (see also \cite[Sect. 3.3]{DJD} and \cite[Sect. 2]{U}).
H\"{o}lder's inequality implies that, for $a, b, c, d \in [0,+\infty)$, $p, q \in \R \cup \{ \pm \infty \}$ and $\la \in [0,1]$, if $p + q \geq 0$, then 
\begin{equation}\label{Lemma10.1}
M_p ( a,b; \la ) M_q (c,d ;\la ) \geq M_\ell (ac, bd ; \la )
\end{equation}
holds, where
\begin{equation}\label{Lemma10.1ell}
\ell  =
\begin{cases}
\ds \frac{pq}{p+q} & (  p+ q \neq 0 ) ,\\
-\infty                 &\( p +q = 0 ,\  (p,q) \neq (0,0)   \),  \\  
0                  & \( (p,q) = (0,0) \) ,
\end{cases}
\end{equation}
and we understand $+ \infty + (-\infty ) = -\infty + \infty =0$ (see, for example, \cite[Lemma 10.1]{Gearly}). 
It follows from \eqref{Lemma10.1} that if $p+q \geq 0$, then, for any $p$-concave function $f$ and $q$-concave function $g$, the function 
\begin{equation}\label{F}
\R^n \times \R^n \ni (x,y ) \mapsto f(x-y) g(y) 
\end{equation}
is $\ell$-concave on $\R^n \times \R^n$.
It follows from the BBL-inequality that if $\ell \geq -1/n$, then, for any $\ell$-concave function $F$ defined on $\R^m \times \R^n$ such that the integral
\begin{equation}\label{G}
G(x) = \int_{\R^n} F(x,y) \, \d y 
\end{equation}
exists for each $x \in \R^m$, the function $G$ is $\ell / (1+n \ell )$-concave on $\R^m$ (see \cite[Theorem 4.3]{B75} and \cite[Corollary 3.5]{BL}).
Here, we understand that $\ell / (1+n \ell )$ is equal to $-\infty$ when $\ell =-1/n$ and to $1/n$ when $\ell = +\infty$.
Using this application of the BBL-inequality with $m=n$ and $F$ in \eqref{F}, we obtain the $\ell / (1+n \ell )$-concavity of \eqref{f*g}.

One of our results of this paper (Theorem \ref{thm_pconc}) is the strict version of the above.
We show that if the following conditions are satisfied, then \eqref{f*g} is strictly $\ell / (1+ n \ell )$-concave on $\R^n$:
\begin{enumerate}[(i)]
\item $f$ is strictly $p$-concave on $\R^n$.

\item $g$ is $q$-concave on $\R^n$.

\item $\R^n \sm g^{-1} (0)$ is bounded, and its interior is not empty.

\item $p +q \geq 0$ and $\ell \geq -1/n$.
\end{enumerate}
Compared to the process of deriving (not necessarily strict) power concavity, to show the {\sl strict} power concavity of \eqref{f*g}, it is essentially sufficient to add two extra assumptions, the strictness of the power concavity of $f$ and the boundedness of the support of $g$.

Our result can be applied to the Gauss--Weierstrass integral,
\begin{equation}\label{Weierstrass}
\begin{array}{rcl}
W g (x,t) &=&\displaystyle  \frac{1}{( 4 \pi t )^{n/2}} \exp \( - \frac{\vert \cdot \vert^2}{4t} \) \ast g (x)  \\[4mm]
&=&\displaystyle  \frac{1}{( 4 \pi t )^{n/2}} \int_{\R^n} \exp \( - \frac{\vert x-y \vert^2}{4t} \)  g(y)  \, \d y ,
\end{array}
\end{equation}
where $g$ is a bounded measurable function defined on $\R^n$. 
It is well-known that $Wg$ satisfies the Cauchy problem for the heat equation 
\begin{equation}
\begin{cases}
\ds \( \frac{\pd}{\pd t} -\De \) Wg (x,t) =0 , &(x,t) \in \R^n \times (0,+\infty ) ,\\
\ds Wg \( x, 0^+ \) = g(x) ,                         &x \in \R^n .
\end{cases}
\end{equation}
Since the Gauss--Weierstrass kernel is strictly $0$-concave on $\R^n$ at any fixed $t \in (0,+\infty)$, our result implies that, for any $0$-concave function $g$ such that its support is a convex body (compact convex set with non-empty interior), the function $Wg (\cdot ,t) : \R^n \to (0,+\infty)$ is strictly $0$-concave on $\R^n$ at any fixed $t \in (0,+\infty)$ (Proposition \ref{pconc_W}). 
We refer to Brascamp and Lieb's investigation \cite[Sect. 4]{BL} for the pioneering work on concavity properties of a solution of a partial differential equation (see also, for example, \cite{IS14Ann}--\cite{Kore}).

Our result can also be applied to the Poisson integral for the upper half-space, 
\begin{equation}\label{Poisson}
\begin{array}{rcl}
P g (x,t) &=&\displaystyle  \frac{2t}{\sigma_n \( S^n \)}  \( \lvert \cdot \rvert^2 +t^2 \)^{-(n+1)/2}  \ast g (x) \\[4mm]
&=&\displaystyle \frac{2t}{\sigma_n \( S^n \)} \int_{\R^n} \( \lvert x-y \rvert^2 +t^2 \)^{-(n+1)/2}  g (y) \, \d y ,
\end{array}
\end{equation} 
where $S^n$ denotes the $n$-dimensional unit sphere, $\sigma_n$ denotes the $n$-dimensional spherical Lebesgue measure, and $g$ is a bounded measurable function defined on $\R^n$.
As an analytic property, $Pg$ satisfies the Cauchy problem for the $1/2$-heat equation 
\begin{equation}
\begin{cases}
\ds \( \frac{\pd}{\pd t} + \sqrt{-\De} \) Pg(x,t) =0 , &(x,t) \in \R^n \times (0,+\infty) ,\\
\ds Pg \( x,0^+ \) = g(x) ,                                    &x \in \R^n ,
\end{cases}
\end{equation}
which is equivalent to the boundary value problem for the Laplace equation 
\begin{equation}
\begin{cases}
\ds \(  \De + \frac{\pd^2}{\pd t^2} \) Pg(x,t) =0 , &(x,t) \in \R^n \times (0,+\infty) ,\\
\ds Pg \( x,  0^+ \) = g(x) ,                                    &x \in \R^n .
\end{cases}
\end{equation}
As a geometric property, when $g$ is the characteristic function $\chi_D$ of a body (the closure of a bounded open set) $D$ in $\R^n$, $P \chi_D (x,t)$ is proportional to the solid angle of $D$ at $(x,t)$.
Namely, 
\begin{equation}
P\chi_D(x,t)= \frac{2 \sigma_n \(  \( D \ast (x,t) \) \cap \( S^n + (x,t) \) \)}{\sigma_n \( S^n \)} ,
\end{equation}
where $D \ast (x,t)$ denotes the cone of base $D$ and vertex $(x,t)$ (see \cite[p. 2157]{SakJGEA}).

It was shown in \cite[Proposition 3.7 (1)]{SakJG} that if $\Ome$ is a convex body in $\R^n$, then $P\chi_\Ome (\cdot ,t) : \R^n \to (0,+\infty)$ is strictly $-1$-concave on $\R^n$ at any fixed $t \in (0,+\infty)$.
This fact is generalized by our result since the Poisson kernel is strictly $-1/(n+1)$-concave on $\R^n$ at any fixed $t \in (0,+\infty)$.
Namely, if $q \geq1$, then, for any $q$-concave function $g$ such that its support is a convex body, the function $Pg (\cdot ,t) : \R^n  \to (0,+\infty)$ is strictly $q/(1-q)$-concave on $\R^n$ (Proposition \ref{pconc_P}). 
We remark that the characteristic function of a convex body is $+\infty$-concave on $\R^n$ and $q/(1-q) = -1$ for $q=+\infty$. 

Since $Wg$ and $Pg$ are solutions of evolution equations, it is natural to investigate those concavity properties involving the space and the time variables jointly. 
In order to investigate such a concavity property, the notion of {\it parabolic power concavity} of a function defined on a {\it parabolically convex set} in $\R^n \times (0,+\infty)$ was introduced by Ishige and Salani \cite{IS14Ann}, and exciting concavity properties of solutions of parabolic problems were shown.
As in the case of a strictly power concave function on a convex set, the strict {\sl parabolic} power concavity of a function on a {\sl parabolically} convex set guarantees the uniqueness of a global maximum point (see Subsections 2.3 and 2.4 for the precise definitions).
This is the reason why we are also interested in the strict parabolic power concavity of the convolution in space variable, 
\begin{equation}\label{Gamma}
\Ga (x,t)  = \f (\cdot ,t ) \ast \psi (x) = \int_{\R^n} \f (x-y, t) \psi (y) \, \d y ,\
 (x,t) \in \R^n \times (0,+\infty ) ,
\end{equation}
of two measurable functions $\f : \R^n \times (0,+\infty ) \to [ 0,+\infty )$ and $\psi : \R^n \to [0,+\infty)$.

The argument to show the power concavity of \eqref{f*g} also works for the parabolic power concavity of $\Ga$ in \eqref{Gamma}, that is, it is derived from the parabolic power concavity of $\f$ and the power concavity of $\psi$ through the BBL-inequality. 
In the main theorem (Theorem \ref{thm_alpconc}), we give a sufficient condition for the strict parabolic power concavity of $\Ga$ in \eqref{Gamma}.
Compared to the process of deriving (not necessarily strict) parabolic power concavity, to show the {\sl strict} parabolic power concavity of $\Ga$ in \eqref{Gamma}, it is essentially sufficient to add two extra assumptions, the ``{\sl almost-strictness}'' of the parabolic power concavity of $\f$ and the boundedness of the support of $\psi$.
Note that the strict parabolic power concavity of $\Ga$ in \eqref{Gamma} implies the strict power concavity of \eqref{f*g}.
To be precise, if $\Ga$ in \eqref{Gamma} is parabolically $p$-concave (resp. strictly parabolically $p$-concave), then, at any fixed $t$, the function $\Ga (\cdot ,t)  : \R^n \to [0,+\infty)$ is $p$-concave (resp. strictly $p$-concave).
Therefore, the strict parabolic power concavity of $\Ga$ in \eqref{Gamma} is the most important subject in this paper.

The strict parabolic power concavity of $W \chi_\Ome$ and $P \chi_\Ome$ can be derived from our main theorem, where $\Ome$ is a bounded convex subset of $\R^n$ with non-empty interior.
In particular, the strict parabolic power concavity of $P \chi_\Ome$ is the usual strict quasi-concavity on $\R^n \times (0,+\infty)$.
Recalling the application to optimization problems and the geometric interpretation of $P\chi_\Ome$, our result states that, at an art museum, when we look at a convex picture $\Ome$ on the wall from an area $E \subset \R^2 \times (0,+\infty)$, if $E$ is compact and convex, then there is a unique point with the maximum viewing solid angle.
Thus, we obtain the uniqueness of an optimal solution to a generalization of Regiomontanus' angle maximization problem.
We refer to \cite[Sect. 3.1]{N} for this kind of issue.

\section{Preliminaries}

In this section, after setting our notation, we introduce the notions of power concave functions, parabolically convex sets and parabolically power concave functions.
We also show some of their fundamental properties.

\subsection{Notation}

For a subset $X$ of $\R^n$, we denote by $\interi X$, $\cl X$ and $\chi_X$ the interior, closure and characteristic function of $X$, respectively.
For $x \in \R^n$ and $\rho \in (0,+\infty)$, we denote by $B(x , \rho)$ the open ball centered at $x$ of radius $\rho$. 
Let $S^{n-1}$ be the boundary of $B(0,1)$.
For $\mu , \nu \in \R$, $X$ and $Y \subset \R^n$, we use the Minkowski addition
\begin{equation}\label{Minkowskiadd}
\mu X + \nu Y = \{ \mu x +\nu y \,  \vert   \, x \in X , \ y \in Y \} .
\end{equation}
In particular, when $Y$ is a singleton $\{ y \}$ in \eqref{Minkowskiadd}, we write
\begin{equation}
\mu X + \nu y = \mu X + \nu \{ y \} =    \{ \mu x +  \nu y \,   \vert   \, x \in X \}.
\end{equation}
For $p \in \R \cup \{ \pm \infty \}$, $a, b \in [ 0, +\infty )$ and $\la \in [0,1]$, $M_p (a, b ; \la)$ is defined in \eqref{pth-mean}.
The {\it convex combination} of two points $x_0$ and $x_1$ of ratio $\la \in [0,1]$ is denoted by 
\begin{equation}
x_\la = (1-\la ) x_0 + \la x_1 .
\end{equation}
A compact convex set with non-empty interior in $\R^n$ is called a {\it convex body}.
For a convex body $K$ in $\R^n$, we denote by $h_K$ the {\it support function} of $K$, that is,
\begin{equation}
h_K (u) = \max \{  x \cdot u \,    \vert   \, x \in K \}   ,\ u \in S^{n-1} .
\end{equation}
Put
\begin{equation}
H^- (h , u)  = \left. \left\{ y \in \R^n   \,   \rvert   \,   y \cdot u \leq h \right\}  ,\ ( h ,u) \in \R \times S^{n-1} . 
\end{equation}

\subsection{Power concave functions}

Let us recall the definition of power concavity of a function.

\begin{defi}\label{def_pconc}
Let $A$ be a convex set in $\R^n$, $f$ a non-negative function defined on $A$, and $p \in \R \cup \{ \pm \infty \}$. 
$f$ is said to be {\it $p$-concave} on $A$ if, for any $x_0, x_1 \in A$ and $\la \in [0,1]$, the inequality 
\begin{equation}
\label{ineq-pconc}
f \( x_\la  \) \geq M_p \( f \( x_0  \) , f \( x_1  \) ; \la \)
\end{equation}
holds.
$0$-concavity and $-\infty$-concavity are also called {\it log-concavity} and {\it quasi-concavity}, respectively.
$f$ is said to be {\it strictly $p$-concave} on $A$ if both of the following conditions hold:
\begin{enumerate}[(i)]
\item $f$ is $p$-concave on $A$.

\item Equality in \eqref{ineq-pconc} holds if and only if any of the conditions $x_0 = x_1$, $\la =0$ or $\la =1$ holds.
\end{enumerate} 
\end{defi}

For $p \in \R \cup \{ \pm \infty \}$, the compositions of a $p$-concave function with a homothety and with a translation are $p$-concave.
To be precise:

\begin{rem}\label{composition}
Let $A$, $f$ and $p$ be as in Definition \ref{def_pconc}.
Suppose that $f$ is $p$-concave (resp. strictly $p$-concave) on $A$.
Then, the following statements hold:
\begin{enumerate}[(1)]
\item Let $s \in \R \sm \{ 0 \}$.
The function $(1/s) A \ni x \mapsto f (sx)$ is $p$-concave (resp. strictly $p$-concave) on $(1/s) A$.

\item Let $\xi \in \R^n$.
The function $A-\xi  \ni x \mapsto f ( x+ \xi )$ is $p$-concave (resp. strictly $p$-concave) on $A - \xi$.
\end{enumerate}
\end{rem}

It directly follows from definition that:

\begin{rem}[{\rm \cite[p. 373]{BL}}]
\label{super-level}
Let $A$ and $f$ be as in Definition \ref{def_pconc}.
$f$ is quasi-concave on $A$ if and only if, for any $a \in [0,+\infty)$, the super-level set $\{ x \in A \,\vert\, f(x) > a\}$ is convex (or empty).
\end{rem}

As mentioned in Introduction, for any $p \in \R \cup \{ \pm \infty \}$, $p$-concave functions are quasi-concave.
Thus, Remark \ref{super-level} implies:

\begin{rem}\label{supp_convex}
Let $A$, $f$ and $p$ be as in Definition \ref{def_pconc}.
If $f$ is $p$-concave on $A$, then, for any $a \in [0,+\infty )$, the super-level set $\{ x \in A \,\vert\, f(x) > a\}$ is convex (or empty).
In particular, if $f$ is $p$-concave on $A$, then $A \sm f^{-1} (0)$ is convex (or empty).
\end{rem}

It follows from the definition of $M_{+\infty}$ that:

\begin{rem}[{\rm \cite[p. 373]{BL}}]
\label{infty-concave}
Let $A$ and $f$ be as in Definition \ref{def_pconc}.
Then, the following statements hold:
\begin{enumerate}[(1)]
\item Let $\Ome \subset A$ be a convex set, and $c \in [0, +\infty)$.
The function $c \chi_\Ome$ is $+\infty$-concave on $A$.

\item Let $\Ome = A \sm f^{-1} (0)$, and fix an arbitrary $\xi \in \Ome$.
If $f$ is $+\infty$-concave on $A$, then $f = f(\xi ) \chi_\Ome$.
\end{enumerate}
\end{rem}

By definition, positive power concave functions are described as follows. 

\begin{rem}
Let $A$ and $f$ be as in Definition \ref{def_pconc}.
Suppose that $f$ is positive on $A$.
Then, the following statements hold:
\begin{enumerate}[(1)]
\item Let $p \in (0,+ \infty )$.
$f$ is $p$-concave if and only if $f^p : A \ni x \mapsto f (x)^p \in (0,+\infty)$ is concave.

\item $f$ is log-concave if and only if $\log f : A \ni x \mapsto  \log f (x) \in (0,+\infty)$ is concave.

\item Let $p \in (-\infty , 0 )$.
$f$ is $p$-concave if and only if $f^p : A \ni x \mapsto f (x)^p \in (0,+\infty)$ is convex.
\end{enumerate}
\end{rem}

Positivity and continuity are fundamental properties of strictly power concave functions. 

\begin{lem}\label{continuity}
Let $A$, $f$ and $p$ be as in Definition \ref{def_pconc}.
Suppose that $f$ is $p$-concave on $A$.
Then, the following statements hold:
\begin{enumerate}[{\rm (1)}]
\item Let $p \in \R \cup \{ +\infty \}$.
If $f$ is positive on $\interi A$, then $f$ is continuous on $\interi A$.

\item Let $p \in \R \cup \{ -\infty \}$.
If $f$ is strictly $p$-concave on $\interi A$, then $f$ has to be positive on $\interi A$.

\item Let $p \in \R$.
If $f$ is strictly $p$-concave on $\interi A$, then $f$ is continuous on $\interi A$.
\end{enumerate}
\end{lem}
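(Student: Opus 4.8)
The plan is to treat the three statements in order, since (3) follows by combining (1) and (2). For statement (1), the key observation is that a positive $p$-concave function with $p \in \R \cup \{+\infty\}$ is, in particular, $1$-concave after a harmless reparametrization when $p \geq 1$, but more robustly it is quasi-concave and locally bounded, and one can use the standard fact that a quasi-concave function need not be continuous—so the genuine tool here is different. The cleanest route: when $p \in (0,+\infty)$, the function $f^p$ is concave and positive on $\interi A$, hence continuous there (finite concave functions on open convex sets are continuous), and then $f = (f^p)^{1/p}$ is continuous as the composition with the continuous map $t \mapsto t^{1/p}$ on $(0,+\infty)$. For $p = 0$, $\log f$ is concave and finite on $\interi A$, hence continuous, so $f = \exp(\log f)$ is continuous. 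For $p \in (-\infty, 0)$, $f^p$ is convex and finite and positive on $\interi A$, again continuous, and $f = (f^p)^{1/p}$ is continuous. For $p = +\infty$, Remark \ref{infty-concave}(2) gives $f = f(\xi)\chi_\Ome$ with $\Ome = A \sm f^{-1}(0)$ convex; if $f$ is positive on $\interi A$ then $\interi A \subset \Ome$, so $f$ is constant, hence continuous, on $\interi A$. First I would dispatch (1) exactly this way, case by case on the sign of $p$.

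For statement (2), the claim is that strict $p$-concavity on $\interi A$ with $p \in \R \cup \{-\infty\}$ forces positivity on $\interi A$. I would argue by contradiction: suppose $f(x_*) = 0$ for some $x_* \in \interi A$. Since $f$ is strictly $p$-concave (hence not identically zero, as strictness requires at least two points where the inequality is genuinely an inequality), pick any $x_1 \in \interi A$ with $f(x_1) > 0$; such a point exists, for otherwise $f \equiv 0$ on $\interi A$ and then equality in \eqref{ineq-pconc} holds for all choices, contradicting strictness. Now consider the segment from $x_*$ to $x_1$. Because $x_* \in \interi A$, we may extend slightly past $x_*$: there is $x_0 \in \interi A$ with $x_*$ lying strictly between $x_0$ and $x_1$, say $x_* = x_{\la_0}$ for some $\la_0 \in (0,1)$. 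If $f(x_0) > 0$, then $M_p(f(x_0), f(x_1); \la_0) > 0$ (for $p \in \R \cup \{-\infty\}$, the $p$-th mean of two positive numbers is positive), so \eqref{ineq-pconc} gives $0 = f(x_*) \geq M_p(f(x_0), f(x_1); \la_0) > 0$, a contradiction. If $f(x_0) = 0$, then I slide $x_0$ along the ray toward $x_*$ until I first reach a point where $f$ is positive, or conclude $f$ vanishes on a whole relatively open sub-segment through $x_*$; in the latter case strictness is violated on that sub-segment (equality $0 = M_p(0,0;\la) = 0$ holds for interior $\la$), again a contradiction. The main subtlety is handling the case $f(x_0) = 0$ cleanly; one robust way is to use that $A \sm f^{-1}(0)$ is convex by Remark \ref{supp_convex}, so $f^{-1}(0) \cap \interi A$ is the complement in $\interi A$ of a convex set, and a relatively open segment cannot meet both a convex set and be forced into its complement on a positive-length piece without contradicting strictness at an interior parameter. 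I expect this bookkeeping around the zero set to be the main obstacle, though it is purely combinatorial-geometric and short.

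Finally, statement (3) is immediate: for $p \in \R$, if $f$ is strictly $p$-concave on $\interi A$ then by (2) it is positive on $\interi A$, and then by (1) (which applies since $p \in \R \cup \{+\infty\}$) it is continuous on $\interi A$. I would simply note this chaining. The only genuinely new work is (2); (1) is a repackaging of the classical continuity of finite convex/concave functions on open convex sets, and (3) is a one-line corollary.
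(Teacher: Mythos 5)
Your handling of (1) and (3) coincides with the paper's: (1) reduces to the continuity of finite concave/convex functions on open convex sets applied to $f^p$ (or $\log f$), with Remark \ref{infty-concave} disposing of $p=+\infty$, and (3) is the chaining of (2) into (1). For (2), however, you take a genuinely different and considerably longer route. You argue by contradiction, first producing a point $x_1$ with $f(x_1)>0$, then extending a segment past a hypothetical zero $x_*$ and splitting into cases according to whether $f(x_0)>0$ or $f(x_0)=0$, with the latter case handled via the convexity of $A\sm f^{-1}(0)$ from Remark \ref{supp_convex}. The paper's proof is direct and needs none of this: for any $x\in\interi A$ choose $\ep>0$ with $B(x,\ep)\subset A$ and any $v\in S^{n-1}$; since $x+\frac{\ep}{2}v\neq x-\frac{\ep}{2}v$, strict $p$-concavity gives
\[
f(x) > M_p \( f \( x+ \frac{\ep}{2} v \) , f \( x- \frac{\ep}{2} v \) ; \frac{1}{2} \) \geq 0 ,
\]
where the final inequality holds because $M_p\geq 0$ by \eqref{pth-mean} even when one (or both) of its arguments vanishes. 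The observation you missed is exactly this: the strict inequality alone already forces $f(x)>0$ with no information about the values at the two auxiliary points, so there is no need to locate a point where $f$ is positive, no case split on $f(x_0)$, and no bookkeeping around the zero set. Your version does close (the convexity of $A\sm f^{-1}(0)$ rescues the $f(x_0)=0$ case, and the ``$f\equiv 0$ contradicts strictness'' step is sound since $\interi A$, if non-empty, contains distinct points), but the ``main subtlety'' you flag dissolves entirely under the direct argument, and the vaguely described ``slide until the first positive point'' step would need to be replaced by the convexity argument you sketch afterwards to be rigorous.
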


\begin{proof}
(1) The statement for $p = + \infty$ follows from Remark \ref{infty-concave}. 
Since $f = (  f^p  )^{1/p}$ for $p \in \R \sm \{ 0 \}$ and $f = \exp \log f$, the statement for $p \in \R$ follows from the well-known theorem in convex analysis: any concave/convex function is continuous on the interior of its domain (see \cite[Theorem 1.5.3]{Sch} or \cite[Theorem 10.1]{Rock}).

(2)
Fix an arbitrary $x \in \interi A$. 
We take a small enough $\ep \in (0,+\infty )$ such that $B(x,\ep) \subset A$.
Let $v \in S^{n-1}$. 
Then, we have
\[
f (x) 
= f \( \frac{1}{2}  \(x+ \frac{\ep}{2} v \) +  \frac{1}{2} \( x- \frac{\ep}{2} v \)   \)  
> M_p \( f \( x+ \frac{\ep}{2} v   \) , f   \(   x- \frac{\ep}{2} v  \) ; \frac{1}{2} \)
\geq 0 .
\]

(3) follows from (1) and (2).
\qed
\end{proof}

There exists a discontinuous strictly quasi-concave function. 

\begin{lem}\label{str_quasi-conc}
Let $k$ be a positive function defined on $[0,+\infty)$. 
Put $k^\circ = k ( \vert \cdot \vert )$.
Suppose that $k$ is strictly decreasing on $[0,+\infty)$.
Then, the following statements hold:
\begin{enumerate}[{\rm (1)}]
\item $k$ is strictly quasi-concave on $[0,+\infty)$.

\item $k^\circ$ is strictly quasi-concave on $\R^n$.
\end{enumerate}
\end{lem}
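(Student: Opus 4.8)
The plan is to verify directly the two defining conditions of strict $-\infty$-concavity from Definition~\ref{def_pconc}: that $k$ (resp.\ $k^\circ$) is quasi-concave, and that equality in \eqref{ineq-pconc} occurs only in the trivial cases $x_0=x_1$, $\la=0$, $\la=1$. Since $k$ is positive, the product of the two relevant values is always nonzero, so $M_{-\infty}$ reduces to $\min$, and the whole statement amounts to the inequality $k(x_\la)\ge\min\{k(x_0),k(x_1)\}$ in general, with strict inequality whenever $x_0\neq x_1$ and $\la\in(0,1)$ (and similarly for $k^\circ$).

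For (1), by the symmetry of the definition under $x_0\leftrightarrow x_1$, $\la\leftrightarrow 1-\la$, I may assume $x_0< x_1$. Then $x_0\le x_\la\le x_1$, with both inequalities strict when $\la\in(0,1)$. Since $k$ is strictly decreasing, $\min\{k(x_0),k(x_1)\}=k(x_1)$ and $k(x_0)\ge k(x_\la)\ge k(x_1)$, the inequality $k(x_\la)>k(x_1)$ being strict precisely when $\la\in(0,1)$. Combined with the obvious equality in the trivial cases, this gives strict quasi-concavity of $k$ on $[0,+\infty)$.

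For (2), the key reduction is that, because $k$ is strictly decreasing, $k^\circ(x_\la)>\min\{k^\circ(x_0),k^\circ(x_1)\}$ is equivalent to $\vert x_\la\vert<\max\{\vert x_0\vert,\vert x_1\vert\}$, and the non-strict quasi-concavity inequality is equivalent to $\vert x_\la\vert\le\max\{\vert x_0\vert,\vert x_1\vert\}$. The latter is immediate from the triangle inequality, $\vert x_\la\vert\le(1-\la)\vert x_0\vert+\la\vert x_1\vert\le\max\{\vert x_0\vert,\vert x_1\vert\}$. For the strict part, fix distinct $x_0,x_1$ and $\la\in(0,1)$. If $\vert x_0\vert\neq\vert x_1\vert$, then already $(1-\la)\vert x_0\vert+\la\vert x_1\vert<\max\{\vert x_0\vert,\vert x_1\vert\}$, and we are done. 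If $\vert x_0\vert=\vert x_1\vert=:r$, then $r>0$ since $x_0\neq x_1$; here equality $\vert x_\la\vert=(1-\la)\vert x_0\vert+\la\vert x_1\vert$ in the triangle inequality would force $x_0$ and $x_1$ to be nonnegative scalar multiples of each other, hence equal (they have common norm $r>0$), a contradiction. Therefore $\vert x_\la\vert<r=\max\{\vert x_0\vert,\vert x_1\vert\}$, which completes (2).

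The only step that needs more than a one-line argument is the equality case of the triangle inequality in $\R^n$ for two vectors of equal nonzero length; everything else follows immediately from the strict monotonicity of $k$ and the convexity of the norm. Accordingly I do not anticipate any real obstacle in this lemma.
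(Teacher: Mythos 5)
Your proof is correct and takes essentially the same approach as the paper: part (1) is identical, and part (2) rests on the same two ingredients, namely that strict monotonicity of $k$ converts the claim into an inequality between norms, and the equality case of the triangle inequality in $\R^n$. The only cosmetic difference is that the paper compares $\vert x_\la \vert$ with $(1-\la)\vert x_0\vert + \la \vert x_1 \vert$ and then invokes part (1), whereas you compare directly with $\max\{\vert x_0\vert , \vert x_1\vert\}$ and split into the cases of equal and unequal norms.
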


\begin{proof}
(1) Let $r_0, r_1 \in [0 ,+\infty)$, and $\la \in (0,1)$.
Suppose $r_0 < r_1$.
Then, we have $r_0 < r_\la < r_1$.
Since $k$ is strictly decreasing, we have $k ( r_\la ) > k ( r_1 ) = M_{-\infty} ( k(r_0) , k (r_1 ) ; \la )$.

(2) Let $x_0 ,x_1 \in \R^n$, and $\la \in (0,1)$.
Suppose $x_0 \neq x_1$.
Put $r_0 = \vert x_0 \vert$ and $r_1 = \vert x_1 \vert$.
Since $k$ is strictly decreasing and $\vert x_\la \vert \leq r_\la$, we have $k^\circ ( x_\la ) = k ( \lvert x_\la \rvert )  \geq k ( r_\la )$.
Equality holds if and only if there exists a positive $s$ such that $x_0 =s x_1$. 
Thus, the strict quasi-concavity of $k$ shown in (1) completes the proof.
\qed
\end{proof}

At the end of this subsection, let us review the precise statement of the BBL-inequality with our notation.
See also \cite[Theorem 3.1]{B75}, \cite[Theorem 3.1]{DU} and \cite[Theorem 10.1]{G2002}.

\begin{theorem}{\rm {\cite[Theorem 3.3]{BL}}}\label{BL}
Let $f_0$ and $f_1$ be non-negative integrable functions defined on $\R^n$.
Suppose that the $L^1$-norms of $f_0$ and $f_1$ are both positive.
Let $\ell \in [ -1/n , + \infty ]$, and 
\[
S(y) = \esssup \left. \left\{ M_\ell \(  f_0  \( y_0 \) , f_1 \( y_1 \) ;\la \)  \,   \rvert   \,    \( y_0 , y_1 \) \in \R^n \times \R^n ,\ y_\la =y \right\} ,\
y \in \R^n .
\]
Then, we have
\[
\int_{\R^n} S(y)  \, \d y
\geq M_{\ell / ( 1+n \ell )} \(  \int_{\R^n} f_0 (y) \, \d y  , \int_{\R^n} f_1 (y) \, \d y  ; \la \) .
\]
\end{theorem}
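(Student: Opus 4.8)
The plan is to prove the Borell--Brascamp--Lieb inequality by induction on the dimension $n$, with the scalar inequality \eqref{Lemma10.1} (needed only for $q=1$) as the sole analytic ingredient. Before starting the induction I would dispose of the two boundary values of $\ell$: one lets $\ell\uparrow+\infty$ and $\ell\downarrow-1/n$ in the inequality for interior $\ell$ and uses that $M_\ell(f_0(y_0),f_1(y_1);\la)$ is nondecreasing in $\ell$ by \eqref{M_monotonicity}, so that $S$ is nondecreasing in $\ell$ and monotone convergence controls $\int_{\R^n}S$, together with $\lim_{\ell\to+\infty}\ell/(1+n\ell)=1/n$ and $\lim_{\ell\downarrow-1/n}\ell/(1+n\ell)=-\infty$. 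Hence it is enough to treat $\ell\in(-1/n,+\infty)$, for which $1+n\ell>0$.

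For the base case $n=1$, set $c_i=\int_\R f_i>0$ and introduce the nondecreasing ``quantile'' functions $u,v\colon(0,1)\to\R$ determined by $\int_{-\infty}^{u(t)}f_0=tc_0$ and $\int_{-\infty}^{v(t)}f_1=tc_1$; put $w(t)=(1-\la)u(t)+\la v(t)$, again nondecreasing. On the relevant set where $f_0(u(t))f_1(v(t))>0$ one has, for a.e.\ such $t$, $u'(t)f_0(u(t))=c_0$ and $v'(t)f_1(v(t))=c_1$, hence
\[
w'(t)=(1-\la)\frac{c_0}{f_0(u(t))}+\la\frac{c_1}{f_1(v(t))}=M_1\(\frac{c_0}{f_0(u(t))},\frac{c_1}{f_1(v(t))};\la\),
\]
while taking $(y_0,y_1)=(u(t),v(t))$ in the definition of $S$ gives $S(w(t))\ge M_\ell(f_0(u(t)),f_1(v(t));\la)$ for a.e.\ $t$. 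Combining the change-of-variables bound $\int_\R S\ge\int_0^1 S(w(t))w'(t)\,\d t$, valid for nondecreasing $w$ and $S\ge0$, with \eqref{Lemma10.1} applied with $p=\ell$, $q=1$ and $\ell+1\ge0$ (whose resulting exponent is $\ell/(1+\ell)$, and $-\infty$ when $\ell=-1$ by \eqref{Lemma10.1ell}), we obtain
\[
\int_\R S\ge\int_0^1 M_\ell(f_0(u(t)),f_1(v(t));\la)\,M_1\(\frac{c_0}{f_0(u(t))},\frac{c_1}{f_1(v(t))};\la\)\,\d t\ge M_{\ell/(1+\ell)}(c_0,c_1;\la),
\]
which is the case $n=1$; the set where $f_0(u(t))f_1(v(t))=0$ contributes nothing because there $M_\ell(\cdot,\cdot;\la)=0$.

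For the inductive step, assume the theorem in dimension $n-1$ (legitimate since $\ell>-1/n>-1/(n-1)$), write points of $\R^n$ as $(z,s)\in\R^{n-1}\times\R$, and set $g(s)=\int_{\R^{n-1}}f_0(z,s)\,\d z$, $h(s)=\int_{\R^{n-1}}f_1(z,s)\,\d z$ and $G(s)=\int_{\R^{n-1}}S(z,s)\,\d z$, so that by Fubini $\int_\R g=c_0$, $\int_\R h=c_1$ and $\int_\R G=\int_{\R^n}S$. Fix $s_0,s_1$ and write $s=(1-\la)s_0+\la s_1$. Whenever $z=(1-\la)z_0+\la z_1$ one has $(z,s)=(1-\la)(z_0,s_0)+\la(z_1,s_1)$, so $S(\cdot,s)$ dominates a.e.\ the $(n-1)$-dimensional $M_\ell$-sup-convolution of $f_0(\cdot,s_0)$ and $f_1(\cdot,s_1)$; the inductive hypothesis gives $G(s)\ge M_{\ell'}(g(s_0),h(s_1);\la)$ with $\ell'=\ell/(1+(n-1)\ell)$ (the case $g(s_0)h(s_1)=0$ being trivial). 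Since $s_0,s_1$ were arbitrary, $G$ dominates the one-dimensional $M_{\ell'}$-sup-convolution of $g$ and $h$, and because $1+\ell'=(1+n\ell)/(1+(n-1)\ell)>0$ the base case applies and yields
\[
\int_{\R^n}S=\int_\R G\ge M_{\ell'/(1+\ell')}(c_0,c_1;\la)=M_{\ell/(1+n\ell)}(c_0,c_1;\la),
\]
the last equality being the identity $\ell'/(1+\ell')=\ell/(1+n\ell)$. This closes the induction.

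I expect the main obstacle to be measure-theoretic rather than algebraic: one must guarantee that $S$, the slices $S(\cdot,s)$ and $G$ are measurable, and that the pointwise estimate $S(w(t))\ge M_\ell(f_0(u(t)),f_1(v(t));\la)$ --- stated above for an essential supremum --- holds for a.e.\ $t$. The usual remedy is to establish the inequality first for $f_0,f_1$ upper semicontinuous with compact support, where the sup-convolution (with a genuine supremum) is again upper semicontinuous, hence Borel, and all the pointwise estimates are valid; one then passes to arbitrary non-negative integrable $f_i$ by truncation and approximation, using a measurable-selection lemma to identify the essential supremum with a measurable function. A secondary technicality, confined to the base case, is the rigour of the substitution $y=w(t)$ when $w$ is merely monotone (with possible flat parts or a singular component) and when $f_0\circ u$ or $f_1\circ v$ vanishes on a set of positive measure; this is handled by the one-dimensional change-of-variables inequality for monotone functions together with the convention $M_\ell(a,b;\la)=0$ whenever $ab=0$.
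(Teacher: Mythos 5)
The paper does not prove Theorem \ref{BL} at all: it is imported verbatim from Brascamp--Lieb \cite[Theorem 3.3]{BL}, and the authors point the reader to \cite{DJD}, \cite{Gup} and \cite{Rino} for proofs. So there is no in-paper argument to compare against; what you have written is essentially the classical proof from those references --- induction on dimension via Fubini slicing, with the one-dimensional case handled by parametrizing mass with quantile functions and invoking the scalar inequality \eqref{Lemma10.1} with $q=1$. The algebra is right ($\ell'=\ell/(1+(n-1)\ell)$, $\ell'/(1+\ell')=\ell/(1+n\ell)$, and the base case needs exactly $\ell'\ge-1$, which holds for $\ell\ge-1/n$), and you correctly identify the real difficulties as measurability of $S$ and of its slices, and the gap between the essential supremum in the definition of $S$ and the pointwise choice $(y_0,y_1)=(u(t),v(t))$; the upper-semicontinuous reduction you sketch is the standard and adequate remedy.

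There is one genuine error: the limiting argument for the endpoint $\ell=-1/n$ runs in the wrong direction. By \eqref{M_monotonicity}, $S=S_\ell$ is \emph{nondecreasing} in $\ell$, so for $\ell>-1/n$ you only get $\int S_\ell\ge\int S_{-1/n}$, and letting $\ell\downarrow-1/n$ in $\int S_\ell\ge M_{\ell/(1+n\ell)}(c_0,c_1;\la)$ bounds $\lim_\ell\int S_\ell$ from below, not $\int S_{-1/n}$; since $\inf_\ell\esssup M_\ell\ge\esssup\inf_\ell M_\ell$ can be strict (e.g.\ for unbounded $f_i$), you cannot conclude. The monotone-convergence reduction is legitimate only for the upper endpoint $\ell=+\infty$, where $S_\ell\uparrow S_{+\infty}$. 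Fortunately the fix is already contained in your own setup: run the induction directly at $\ell=-1/n$. Then $\ell'=(-1/n)/(1-(n-1)/n)=-1$, and your base case handles $\ell'=-1$ because \eqref{Lemma10.1} with $p=-1$, $q=1$ has $p+q=0$ and output exponent $-\infty$ by \eqref{Lemma10.1ell}, yielding $\int_\R S\ge\min\{c_0,c_1\}=M_{-\infty}(c_0,c_1;\la)$, which matches the convention $\ell/(1+n\ell)=-\infty$ at $\ell=-1/n$. With that adjustment (restricting the limiting device to $\ell=+\infty$ only), the proposal is a correct proof of the cited theorem.
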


\subsection{Parabolically convex sets}

The notion of {\it $\al$-parabolic convexity} of a subset of $\R^n \times (0,+\infty)$ was introduced in \cite{IS11}.
It is an extension of the usual parabolic convexity introduced in \cite{B96}.
We show some basic properties of parabolically convex sets.

\begin{defi}[{\rm \cite[Definition 3.5]{IS11}}]
Let $E$ be a subset of $\R^n \times (0,+\infty)$, and $\al \in \R$.
$E$ is said to be {\it $\al$-parabolically convex} if, for any $(x_0, t_0 ) , (x_1, t_1) \in E$ and $\la \in [0,1]$, $( x_\la , M_\al ( t_0 , t_1 ; \la ) ) \in E$ holds.
\end{defi}

We remark that the original parabolic convexity \cite{B96} corresponds to the case where $\al =1/2$.

\begin{ex}
Let $0 \leq a < b \leq + \infty$, $I =(a,b)$, and $\al \in \R$.
Put 
\[
E = 
\begin{cases}
\left\{ (x,t)     \,    \lvert   \,    \vert x \vert < t^\al ,\ t \in I \right\}  \right.    &( \al \neq 0 ) ,\\
\left\{ (x,t)     \,   \lvert   \,    \vert x \vert < \log t ,\ t \in I \right\} \right.    &( \al =0 ) .
\end{cases}
\]
Then, $E$ is $\al$-parabolically convex.
\end{ex}

We show that convex sets in $\R^n$ can generate parabolically convex sets in $\R^n \times (0,+\infty)$.

\begin{prop}\label{conv=>paraconv}
Let $A$ be a convex set in $\R^n$, and $\al \in \R \sm \{ 0 \}$.
Put 
\[
\hat{A}_\al = \left\{ (x,t) \in \R^n \times (0,+\infty )     \,   \lvert    \,    \frac{x}{t^\al} \in A \right\} \right. .
\]
Then, $\hat{A}_\al$ is $\al$-parabolically convex.
\end{prop}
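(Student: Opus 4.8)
The plan is to verify the definition of $\al$-parabolic convexity directly. Take two points $(x_0,t_0),(x_1,t_1)\in\hat A_\al$ and $\la\in[0,1]$; by definition of $\hat A_\al$ we have $x_0/t_0^\al\in A$ and $x_1/t_1^\al\in A$. Set $t_\la=M_\al(t_0,t_1;\la)$. I must show $(x_\la,t_\la)\in\hat A_\al$, i.e. that $x_\la/t_\la^\al\in A$. Since $A$ is convex, it suffices to exhibit $x_\la/t_\la^\al$ as a convex combination of the two points $x_0/t_0^\al$ and $x_1/t_1^\al$ of $A$. The natural candidate is to write
\[
\frac{x_\la}{t_\la^\al}
=\frac{(1-\la)x_0+\la x_1}{t_\la^\al}
=(1-\mu)\,\frac{x_0}{t_0^\al}+\mu\,\frac{x_1}{t_1^\al}
\]
for a suitable $\mu=\mu(\la,t_0,t_1)\in[0,1]$. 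Matching the coefficients of $x_0$ and $x_1$ forces
\[
1-\mu=(1-\la)\frac{t_0^\al}{t_\la^\al},\qquad \mu=\la\frac{t_1^\al}{t_\la^\al}.
\]

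So the key step is to check that these two prescriptions for $\mu$ are consistent, i.e. that $(1-\la)t_0^\al+\la t_1^\al=t_\la^\al$, and that the resulting $\mu$ lies in $[0,1]$. The consistency is exactly the definition of the $\al$-th mean when $\al\notin\{0,\pm\infty\}$: indeed $t_\la=M_\al(t_0,t_1;\la)=\big((1-\la)t_0^\al+\la t_1^\al\big)^{1/\al}$ precisely says $t_\la^\al=(1-\la)t_0^\al+\la t_1^\al$ (here $t_0,t_1>0$ so $t_0t_1>0$ and we are in the generic branch of \eqref{pth-mean}). Given this, $\mu=\la t_1^\al/t_\la^\al$ is nonnegative (all quantities positive), and $1-\mu=(1-\la)t_0^\al/t_\la^\al$ is also nonnegative, so $\mu\in[0,1]$ as required. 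Then $x_\la/t_\la^\al=(1-\mu)(x_0/t_0^\al)+\mu(x_1/t_1^\al)\in A$ by convexity of $A$, which gives $(x_\la,t_\la)\in\hat A_\al$.

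I would handle the endpoint cases $\la=0$ and $\la=1$ separately (or just observe they are trivial, since then $(x_\la,t_\la)$ equals one of the given points). The statement restricts to $\al\in\R\sm\{0\}$ but does not exclude $\al=\pm\infty$; since $\al\in\R$ is implicit here, only the generic branch of $M_\al$ is in play and no $\pm\infty$ bookkeeping is needed. The only mild subtlety — and the thing most worth stating cleanly rather than a genuine obstacle — is the algebraic identity $t_\la^\al=(1-\la)t_0^\al+\la t_1^\al$; once that is written down, consistency of the two formulas for $\mu$ and the bound $\mu\in[0,1]$ are immediate, and the proof is essentially a one-line convex-combination computation. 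There is no real difficulty here; the whole point is that the substitution $x\mapsto x/t^\al$ turns the $\al$-parabolic convex combination into an ordinary convex combination.
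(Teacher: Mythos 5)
Your proof is correct and is exactly the paper's argument: the authors simply exhibit the identity $x_\la/M_\al(t_0,t_1;\la)^\al=\frac{(1-\la)t_0^\al}{M_\al(t_0,t_1;\la)^\al}\frac{x_0}{t_0^\al}+\frac{\la t_1^\al}{M_\al(t_0,t_1;\la)^\al}\frac{x_1}{t_1^\al}$ (their equation \eqref{combination}) and conclude by convexity of $A$. You spell out the verification that the coefficients are nonnegative and sum to one, which the paper leaves implicit, but the route is the same.
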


The proof is directly completed by the convex combination
\begin{equation}
\label{combination}
\begin{split}
\frac{x_\la}{M_\al \( t_0 ,t_1 ; \la \)^\al}  
=\frac{(1-\la) t_0^\al}{M_\al \( t_0, t_1 ;\la \)^\al} \frac{x_0}{t_0^\al} +   \frac{\la t_1^\al}{M_\al \( t_0, t_1 ;\la \)^\al}   \frac{x_1}{t_1^\al}   ,\ \\[2mm]
\(   x_0 , t_0  \) ,   \( x_1 , t_1 \) \in \R^n \times (0,+\infty) .
\end{split}
\end{equation}
This is sometimes used throughout this paper.

When we connect the two cases where $\al \neq 0$ and where $\al =0$, we use the following relations:
\begin{align}
\label{log0=1log}
\log M_0 \( t_0 ,t_1; \la \)  &= M_1 \( \log t_ 0 , \log t_1 ; \la \) ,\
\( t_0 ,t_1 , \la \) \in (1,+\infty) \times (1, +\infty) \times [0,1]   ;\\
\label{exp1=0exp}
\exp M_1 (t_0 ,t_1 ; \la ) &= M_0 ( e^{t_0} , e^{t_1} ; \la ) ,\
\( t_0 ,t_1 , \la \) \in (0 , +\infty) \times ( 0 , +\infty) \times [0,1] .
\end{align}

\begin{cor}\label{conv=>0paraconv}
Let $A$ be a convex set in $\R^n$.
Put 
\[
\hat{A}_0 = \left\{ (x,t) \in \R^n \times (1, + \infty )    \,   \lvert    \,    \frac{x}{\log t} \in A \right\} \right.  .
\]
Then, $\hat{A}_0$ is $0$-parabolically convex.
\end{cor}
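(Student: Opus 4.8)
The plan is to reduce Corollary \ref{conv=>0paraconv} to Proposition \ref{conv=>paraconv} by a logarithmic change of the time variable, using the connecting identities \eqref{log0=1log} and \eqref{exp1=0exp}. First I would introduce the substitution $s = \log t$, which maps $(1,+\infty)$ bijectively onto $(0,+\infty)$, and observe that under this substitution the defining condition $x/\log t \in A$ of $\hat A_0$ becomes exactly $x/s \in A$, i.e. the condition defining $\hat A_1$ (the set from Proposition \ref{conv=>paraconv} with $\al = 1$). So the map $(x,t) \mapsto (x, \log t)$ carries $\hat A_0$ onto $\hat A_1$, and the inverse map $(x,s) \mapsto (x, e^s)$ carries $\hat A_1$ back onto $\hat A_0$.

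Next I would verify that this change of variable intertwines $0$-parabolic convex combinations with $1$-parabolic (i.e. ordinary) convex combinations in the time slot. Concretely: given $(x_0,t_0), (x_1,t_1) \in \hat A_0$ with $t_0, t_1 \in (1,+\infty)$, set $s_i = \log t_i \in (0,+\infty)$, so that $(x_i, s_i) \in \hat A_1$. By Proposition \ref{conv=>paraconv} with $\al = 1$ (whose $\al$-parabolic combination is the genuine convex combination $(x_\la, M_1(s_0,s_1;\la)) = (x_\la, (1-\la)s_0 + \la s_1)$), the point $(x_\la, M_1(s_0, s_1;\la))$ lies in $\hat A_1$. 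Applying the inverse map, $(x_\la, \exp M_1(s_0,s_1;\la)) \in \hat A_0$. Now \eqref{exp1=0exp} — equivalently \eqref{log0=1log} — gives $\exp M_1(s_0, s_1; \la) = \exp M_1(\log t_0, \log t_1; \la) = M_0(t_0, t_1; \la)$, so $(x_\la, M_0(t_0,t_1;\la)) \in \hat A_0$, which is precisely the $0$-parabolic convexity of $\hat A_0$.

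The only genuine point requiring care — and it is minor rather than a real obstacle — is domain bookkeeping: one must keep $t_0, t_1 > 1$ so that $\log t_0, \log t_1 > 0$, which is exactly the regime in which \eqref{log0=1log}/\eqref{exp1=0exp} are stated and in which $s_i$ lands in $(0,+\infty)$ as required by Proposition \ref{conv=>paraconv}. This is why the corollary restricts to $\R^n \times (1,+\infty)$ rather than $\R^n \times (0,+\infty)$. Everything else is a direct translation, so the proof will be just a few lines.
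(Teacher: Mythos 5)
Your proposal is correct and follows the same route as the paper: both reduce the claim to Proposition \ref{conv=>paraconv} with $\al=1$ via the identification of $\hat{A}_0$ with the preimage of $\hat{A}_1$ under $(x,t)\mapsto(x,\log t)$, and both invoke the relation \eqref{log0=1log} (equivalently \eqref{exp1=0exp}) to convert $M_0(t_0,t_1;\la)$ into $M_1(\log t_0,\log t_1;\la)$. Your version merely spells out the bookkeeping that the paper leaves implicit.
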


\begin{proof}
Let $\hat{A}_1$ be as in Proposition \ref{conv=>paraconv} with $\al =1$.
We remark 
\[
\hat{A}_0 = \left\{ (x,t) \in \R^n \times (1, + \infty )   \,   \lvert    \,  (x, \log t) \in \hat{A}_1 \right\} \right. .
\]
Thanks to the relation \eqref{log0=1log}, Proposition \ref{conv=>paraconv} with $\al =1$ completes the proof.
\qed
\end{proof}

$\hat{A}_\al$ in Proposition \ref{conv=>paraconv} or in Corollary \ref{conv=>0paraconv} is concretely given when $A$ is a {\it convex cone}, that is, $A$ additionally has the property that, for any $(x , s) \in A \times ( 0 , +\infty)$, $sx \in A$ holds.

\begin{prop}\label{cone}
Let $A$, $\al$, and $\hat{A}_\al$ be as in Proposition \ref{conv=>paraconv}.
$A$ is a convex cone if and only if $\hat{A}_\al  = A \times (0,+\infty )$.
\end{prop}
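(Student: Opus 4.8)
The plan is to prove the two implications separately, using the characterization $(x,t)\in\hat A_\al \iff x/t^\al\in A$ as the only working tool together with the definition of a convex cone.

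For the "if" direction, suppose $\hat A_\al = A\times(0,+\infty)$. First I would show $A$ is convex: given $x_0,x_1\in A$ and $\la\in[0,1]$, pick any $t_0,t_1\in(0,+\infty)$; since $(x_i,t_i)\in\hat A_\al$ and $\hat A_\al$ is $\al$-parabolically convex (Proposition \ref{conv=>paraconv}), we get $(x_\la, M_\al(t_0,t_1;\la))\in\hat A_\al = A\times(0,+\infty)$, hence $x_\la\in A$. (Alternatively, convexity is already built into the hypothesis on $A$, so this step is free; but it is worth noting that $A\times(0,+\infty)$ is $\al$-parabolically convex exactly when $A$ is convex.) Next, the cone property: fix $x\in A$ and $s\in(0,+\infty)$. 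Choose $t$ with $t^\al = 1$ when convenient, or more robustly observe that $(x,t)\in A\times(0,+\infty)=\hat A_\al$ for every $t$, so $x/t^\al\in A$ for all $t\in(0,+\infty)$; as $t$ ranges over $(0,+\infty)$ and $\al\neq 0$, the quantity $t^{-\al}$ ranges over all of $(0,+\infty)$, so $sx\in A$ for every $s>0$. Thus $A$ is a convex cone.

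For the "only if" direction, suppose $A$ is a convex cone. The inclusion $\hat A_\al\subset A\times(0,+\infty)$ is immediate from the definition of $\hat A_\al$ together with $\hat A_\al\subset\R^n\times(0,+\infty)$. For the reverse inclusion, take $(x,t)\in A\times(0,+\infty)$; then $x\in A$ and, since $A$ is a cone with $s:=t^{-\al}>0$, we have $x/t^\al = t^{-\al}x\in A$, i.e.\ $(x,t)\in\hat A_\al$. Combining the two inclusions gives $\hat A_\al = A\times(0,+\infty)$.

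There is no real obstacle here; the only points requiring a moment's care are (a) using $\al\neq 0$ to ensure $t\mapsto t^{-\al}$ surjects onto $(0,+\infty)$, which is what lets an arbitrary scaling factor be realized, and (b) being careful that the cone axiom as stated ("for any $(x,s)\in A\times(0,+\infty)$, $sx\in A$") is applied with the correct scalar $t^{\pm\al}$ in each direction. I would write the argument compactly as:
\begin{proof}
Suppose first that $\hat A_\al = A\times(0,+\infty)$. For $x\in A$ and $s\in(0,+\infty)$, choose $t=s^{-1/\al}\in(0,+\infty)$, so that $t^\al = 1/s$. Since $(x,t)\in A\times(0,+\infty)=\hat A_\al$, we have $sx = x/t^\al\in A$, hence $A$ is a convex cone. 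Conversely, suppose $A$ is a convex cone. If $(x,t)\in\hat A_\al$ then $x/t^\al\in A$ and, since $A$ is a cone, $x = t^\al(x/t^\al)\in A$; thus $\hat A_\al\subset A\times(0,+\infty)$. If $(x,t)\in A\times(0,+\infty)$ then, since $A$ is a cone and $t^{-\al}>0$, we have $x/t^\al = t^{-\al}x\in A$, so $(x,t)\in\hat A_\al$; thus $A\times(0,+\infty)\subset\hat A_\al$. Therefore $\hat A_\al = A\times(0,+\infty)$.
\qed
\end{proof}
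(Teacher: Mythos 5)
Your proof is correct and follows essentially the same route as the paper's: both directions use the identity $x/t^\al = t^{-\al}x$ with the substitution $t=s^{-1/\al}$ for the cone property, and the two inclusions in the converse are argued identically. No gaps.
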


\begin{proof}
Suppose that $A$ is a convex cone.
Let $(x,t) \in \hat{A}_\al$. 
By the definition, we have $x/t^\al \in A$.
Since $A$ is a convex cone, we have $x = t^\al ( x / t^\al ) \in A$.
Thus, $(x,t) \in A \times (0,+\infty)$.
On the other hand, let $(x,t) \in A \times (0,+\infty)$.
Since $A$ is a convex cone, we have $x / t^\al  \in A$, that is, $(x,t) \in \hat{A}_\al$.

Suppose $\hat{A}_\al = A \times (0, +\infty)$.
Let $(x,s) \in A \times (0,+\infty)$. 
Since $(x,s^{-1/\al}) \in A \times (0,+\infty)  = \hat{A}_\al$, we obtain $s x = x / ( s^{-1/\al} )^\al \in A$.
\qed
\end{proof}

\begin{rem}\label{A0A1}
Let $A$ and $\hat{A}_0$ be as in Corollary \ref{conv=>0paraconv}.
Let $\hat{A}_1$ be as in Proposition \ref{conv=>paraconv} with $\al =1$.
Then, the following statements hold:
\begin{enumerate}[(1)]
\item $\hat{A}_0 = \{ (x,t) \in \R^n \times (1 , +\infty) \,  \vert   \, ( x, \log t) \in \hat{A}_1 \}$ (which was mentioned in the proof of Corollary \ref{conv=>0paraconv}).

\item $\hat{A}_1 = \{ (x,t) \in \R^n \times (0 , +\infty) \,  \vert   \, ( x, e^t ) \in \hat{A}_0 \}$.

\item $\hat{A}_0 = A \times (1,+\infty)$ if and only if $\hat{A}_1 = A \times (0,+\infty)$.
\end{enumerate}
\end{rem}

\begin{cor}\label{cone0}
Let $A$ and $\hat{A}_0$ be as in Corollary \ref{conv=>0paraconv}.
A is a convex cone if and only if $\hat{A}_0 = A \times (1, + \infty )$.
\end{cor}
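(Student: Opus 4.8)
The plan is to deduce Corollary \ref{cone0} from Proposition \ref{cone} by the logarithmic change of variables already recorded in Remark \ref{A0A1}. Write $\hat{A}_1$ for the set $\hat{A}_\al$ of Proposition \ref{conv=>paraconv} with $\al = 1$. By Remark \ref{A0A1}(3), the two conditions $\hat{A}_0 = A \times (1,+\infty)$ and $\hat{A}_1 = A \times (0,+\infty)$ are equivalent. On the other hand, Proposition \ref{cone} applied with $\al = 1$ says that $A$ is a convex cone if and only if $\hat{A}_1 = A \times (0,+\infty)$. Chaining these two equivalences gives exactly the assertion of the corollary, so the proof is just the concatenation of results already established.

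If one prefers a self-contained argument, it can be obtained by copying the proof of Proposition \ref{cone} almost verbatim, keeping track of the fact that $\hat{A}_0$ is fibred over $(1,+\infty)$ rather than $(0,+\infty)$. Assume $A$ is a convex cone. For $(x,t) \in \hat{A}_0$ we have $x/\log t \in A$, hence $x = (\log t)\,(x/\log t) \in A$, so $\hat{A}_0 \subset A \times (1,+\infty)$; conversely, for $(x,t) \in A \times (1,+\infty)$ the cone property gives $x/\log t \in A$, i.e. $(x,t) \in \hat{A}_0$. For the reverse implication, suppose $\hat{A}_0 = A \times (1,+\infty)$ and take $(x,s) \in A \times (0,+\infty)$; since $e^{1/s} > 1$ we get $(x, e^{1/s}) \in A \times (1,+\infty) = \hat{A}_0$, whence $s x = x/\log\big(e^{1/s}\big) \in A$, so $A$ is a convex cone.

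There is essentially no obstacle here; the only point that requires a moment's care is the shift of the time interval. In the reverse direction one must pick the auxiliary second coordinate in $(1,+\infty)$, which is why $e^{1/s}$ (valid for every $s > 0$) is the right choice, and the identification of $\hat{A}_0$ with $\hat{A}_1$ uses that $\log$ is a bijection $(1,+\infty) \to (0,+\infty)$ — precisely the range in which Corollary \ref{conv=>0paraconv} and Remark \ref{A0A1} were set up. I would present the short deduction via Proposition \ref{cone} and Remark \ref{A0A1} as the main proof and, if space permits, mention the direct computation as an alternative.
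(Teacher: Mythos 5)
Your main deduction is exactly the paper's proof: combine Remark \ref{A0A1} (3) with Proposition \ref{cone} for $\al=1$. The self-contained alternative you sketch (including the choice $e^{1/s}$ for the reverse implication) is also correct, but the short chaining argument is all that is needed.
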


\begin{proof}
Thanks to Remark \ref{A0A1} (3), Proposition \ref{cone} with $\al =1$ completes the proof.
\qed
\end{proof}

Conversely, parabolically convex sets in $\R^n \times (0,+\infty)$ naturally generate convex sets in $\R^n$ since $t  = M_\al ( t ,t    ;   \la )$.

\begin{rem}\label{paraconv=>conv}
Let $E$ be a subset of $\R^n \times (0,+\infty)$, and $\al \in \R$.
Put 
\[
\check{E}(t) = \left. \left\{ x \in \R^n   \,   \rvert   \,   (x,t) \in E \right\} , \
t \in (0,+\infty ).
\] 
If $E$ is $\al$-parabolically convex, then, for each $t \in (0,+\infty )$, $\check{E}(t)$ is convex (or empty).
\end{rem}

For each $\al \in \R$, $\al$-parabolically convex sets have the same basic properties as in \cite[Sections 1 and 2]{B96} (which corresponds to the case where $\al =1/2$).
The properties are not used for the proof of our main theorem, but we show them here, which might be of help in understanding the shape of an $\al$-parabolically convex set.
The proofs are slightly different from \cite{B96}.

\begin{rem}\label{B96Intro}
Let $E$ be a subset of $\R^n \times (0,+\infty)$, and $\al \in \R$.
$E$ is $\al$-parabolically convex if and only if, for any $(x_0, t_0 ) , (x_1, t_1) \in E$, both of the following two hold:
\begin{enumerate}[(i)]
\item If $t_0 \neq t_1$, then, for any $\theta \in [0,1]$, 
\[
E \ni 
\begin{cases}
\ds \( \frac{t_1^\al - t_\theta^\al}{t_1^\al -t_0^\al} x_0 + \frac{t_\theta^\al - t_0^\al}{t_1^\al -t_0^\al} x_1 , t_\theta \) &(\al \neq 0 ) ,  \\
\ds \( \frac{\log t_1 -\log t_\theta}{\log t_1 - \log t_0} x_0 + \frac{\log t_\theta -\log t_0}{\log t_1 - \log t_0} x_1 , t_\theta \) & (\al =0 ) .
\end{cases}
\]

\item If $t_0 = t_1$, then, for any $\theta \in [0,1]$, $(x_\theta , t_0) \in E$.
\end{enumerate}
\end{rem}

\begin{prop}\label{B96Thm2.1}
Let $E$ be a subset of $\R^n \times (0,+\infty)$, and $\al \in \R \sm \{ 0 \}$.
Put 
\begin{align*}
\ds \cE_\al (s ; E ) = \left\{  y    \,  \lvert    \,     \( \frac{y}{s} , s^{-1/\al} \) \in  E \right\} \right.  ,\
s \in (0,+\infty ) ;  \\
\ds \ome_\al (x,t) =  \( \frac{x}{t^\al} , \frac{1}{t^\al} \) ,\ (x,t) \in \R^n \times (0,+\infty) .
\end{align*}
The following statements are equivalent:
\begin{enumerate}[{\rm (i)}]
\item $E$ is $\al$-parabolically convex.

\item For any $s_0 ,s_1 \in (0,+\infty )$ and $\theta \in [0,1]$, $(1-\theta ) \cE_\al ( s_0 ;E ) + \theta \cE_\al ( s_1 ; E ) \subset \cE_\al ( s_\theta ; E )$ holds.

\item $\ome_\al (E)$ is convex. 
\end{enumerate}
\end{prop}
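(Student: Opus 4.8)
The plan is to prove the equivalence (i) $\Leftrightarrow$ (iii) $\Leftrightarrow$ (ii) by showing that the map $\ome_\al$ converts $\al$-parabolic combinations into genuine affine (convex) combinations, and that the slices $\cE_\al(s;E)$ are exactly the ``horizontal'' sections of $\ome_\al(E)$ in a suitable coordinate system.

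\medskip

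\textbf{Step 1: the key algebraic identity for $\ome_\al$.}
First I would take $(x_0,t_0),(x_1,t_1)\in\R^n\times(0,+\infty)$ and set $t_\la=M_\al(t_0,t_1;\la)$, so that $t_\la^\al=(1-\la)t_0^\al+\la t_1^\al$. Writing $\ome_\al(x_i,t_i)=(x_i/t_i^\al,\,1/t_i^\al)=:(u_i,v_i)$, I claim that
\[
\ome_\al\(x_\la,\,t_\la\)=\mu\,(u_0,v_0)+(1-\mu)\,(u_1,v_1),
\qquad \mu=\frac{(1-\la)t_0^\al}{t_\la^\al}.
\]
Indeed the last coordinate reads $1/t_\la^\al=\bigl((1-\la)t_0^\al+\la t_1^\al\bigr)/\bigl(t_\la^\al\,t_0^\al\cdot\tfrac{1}{\,}\bigr)$... more cleanly, $\mu v_0+(1-\mu)v_1=\tfrac{(1-\la)t_0^\al}{t_\la^\al t_0^\al}+\tfrac{\la t_1^\al}{t_\la^\al t_1^\al}=\tfrac{(1-\la)+\la}{t_\la^\al}=\tfrac1{t_\la^\al}$, and the first $n$ coordinates are exactly the convex combination displayed in \eqref{combination}. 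Moreover, as $\la$ ranges over $[0,1]$ the weight $\mu$ ranges over $[0,1]$ and is a continuous strictly monotone function of $\la$ (since $t_0^\al,t_1^\al>0$), so the correspondence is a bijection of parameter intervals. This identity is the heart of the proof; once it is in hand the rest is bookkeeping.

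\medskip

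\textbf{Step 2: (i) $\Leftrightarrow$ (iii).}
Assume (i). Given two points $p_0,p_1\in\ome_\al(E)$, write $p_i=\ome_\al(x_i,t_i)$ with $(x_i,t_i)\in E$. A general convex combination $\mu p_0+(1-\mu)p_1$ with $\mu\in[0,1]$ equals, by Step 1, $\ome_\al(x_\la,t_\la)$ for the unique $\la\in[0,1]$ determined by $\mu=(1-\la)t_0^\al/t_\la^\al$; since $E$ is $\al$-parabolically convex, $(x_\la,t_\la)\in E$, hence $\mu p_0+(1-\mu)p_1\in\ome_\al(E)$. Thus $\ome_\al(E)$ is convex. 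Conversely, assume (iii) and take $(x_0,t_0),(x_1,t_1)\in E$, $\la\in[0,1]$. Then $\ome_\al(x_i,t_i)\in\ome_\al(E)$, and by Step 1 the point $\ome_\al(x_\la,t_\la)$ is the convex combination of these two with weight $\mu$, hence lies in $\ome_\al(E)$. Since $\ome_\al$ is injective (from $(x/t^\al,1/t^\al)$ one recovers $t^\al=1/v$ and then $x=u/v$), there is $(x',t')\in E$ with $\ome_\al(x',t')=\ome_\al(x_\la,t_\la)$, forcing $(x',t')=(x_\la,t_\la)$, i.e. $(x_\la,t_\la)\in E$. Hence (i).

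\medskip

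\textbf{Step 3: (iii) $\Leftrightarrow$ (ii) via the slices $\cE_\al(s;E)$.}
Here I would observe that, with $s\in(0,+\infty)$, the condition $(y/s,s^{-1/\al})\in E$ means exactly $\ome_\al(y/s,s^{-1/\al})=(y/s\cdot s,\,s)=(y,s)\in\ome_\al(E)$; that is,
\[
\cE_\al(s;E)=\left.\left\{\,y\;\rvert\;(y,s)\in\ome_\al(E)\right\}\right.,
\]
the horizontal section of $\ome_\al(E)$ at height $s$ (in the $(u,v)$ coordinates, the $n+1$-st coordinate $v$ plays the role of $s$). A set in $\R^{n+1}$ is convex if and only if, for all heights $s_0,s_1$ and all $\theta$, the section at $s_\theta=(1-\theta)s_0+\theta s_1$ contains $(1-\theta)(\text{section at }s_0)+\theta(\text{section at }s_1)$: the ``only if'' is immediate from convexity applied coordinatewise, and the ``if'' follows because any two points of the set lie in sections at some $s_0,s_1$ and the segment between them is covered by the section-wise inclusions. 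Translating this characterization through the identification above yields precisely the equivalence of (iii) and (ii). I should note that when $\cE_\al(s_0;E)$ or $\cE_\al(s_1;E)$ is empty the inclusion in (ii) is vacuous, matching the fact that empty sections cause no obstruction to convexity.

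\medskip

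\textbf{Main obstacle.}
The only genuinely delicate point is Step 1, i.e. verifying that $\ome_\al$ really is an affine-combination-preserving bijection and tracking how the parameter $\la$ and the convex weight $\mu$ correspond (and that this correspondence is a bijection of $[0,1]$ onto itself, so no convex combinations are missed). After that, Steps 2 and 3 are formal: the equivalences reduce to the standard fact that convexity of a subset of $\R^{n+1}$ is equivalent to the section-wise Minkowski inclusion along the last coordinate. I would also make explicit at the outset that $\ome_\al$ maps $\R^n\times(0,+\infty)$ bijectively onto $\R^n\times(0,+\infty)$ (its inverse being $(u,v)\mapsto(u/v,\,v^{-1/\al})$), since this injectivity is used in the converse direction of Step 2.
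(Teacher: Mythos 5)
Your proof is correct and follows essentially the same route as the paper: both rest on the convex-combination identity \eqref{combination} (your Step 1, with your weight $\mu$ being $1-\theta$ for the paper's reparametrization $\theta=\la t_1^\al/M_\al(t_0,t_1;\la)^\al$) and on identifying $\cE_\al(s;E)$ with the horizontal section of $\ome_\al(E)$ at height $s$. The only difference is organizational --- the paper proves (i)$\iff$(ii) directly and treats (ii)$\iff$(iii) as immediate from the section identity, whereas you prove (i)$\iff$(iii) directly and deduce (ii) from the standard section-wise characterization of convexity.
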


\begin{proof}
(ii)$\iff$(iii) follows from 
\[
\left. \ome_\al (E) = 
\left\{ (y,s)  \in \R^n \times (0,+\infty)  \, \rvert \,  y \in \cE_\al (s ; E ) \right\}  .
\]

(i)$\implies$(ii): Let $y_0 \in \cE_\al (s_0 ; E )$, $y_1 \in \cE_\al (s_1 ;E )$, and $\theta \in [0,1]$.
Put $\la = \theta s_1 / s_\theta \in [0,1]$.
Since we have $(y_0 /s_0 , s_0^{-1/\al} ) \in E$ and $(y_1 /s_1 , s_1^{-1/\al} ) \in E$, we obtain
\[
\( \frac{y_\theta}{s_\theta} , s_\theta^{-1/\al} \) 
= \( (1-\la) \frac{y_0}{s_0} + \la \frac{y_1}{s_1} , M_\al \( s_0^{-1/\al} , s_1^{-1/\al} ; \la \) \) 
\in E  .
\]

(ii)$\implies$(i): Let $(x_0 ,t_0 ) , (x_1 ,t_1 ) \in E$, and $\la \in [0,1]$.
Put $\theta = \la t_1^\al / M_\al ( t_0 , t_1 ; \la )^\al \in [0,1]$.
Since we have $t_0^{-\al} x_0  \in \cE_\al ( t_0^{-\al} ;E )$ and $t_1^{-\al} x_1  \in \cE_\al ( t_1^{-\al} ;E)$, we have $(1-\theta) t_0^{-\al} x_0 + \theta t_1^{- \al} x_1   \in  \cE_\al ( (1-\theta ) t_0^{-\al} + \theta t_1^{-\al} ;E )$.
Hence, we obtain
\[
\( x_\la , M_\al \( t_0 ,t_1 ;\la \) \)
= \( \frac{(1-\theta) t_0^{-\al} x_0 + \theta t_1^{-\al} x_1}{(1-\theta ) t_0^{-\al} + \theta t_1^{-\al}}, \( (1-\theta ) t_0^{-\al} + \theta t_1^{-\al} \)^{-1/\al} \) \in E   .    
\]
\qed
\end{proof}

\begin{rem}\label{0conv=1conv}
Let $E$ be a subset of $\R^n \times (1,+\infty)$, and $\tilde{E} =\{ (x, \log t) \,  \vert   \,  (x,t) \in E \}$.
Put 
\begin{align*}
\cE_0 (s; E ) = \left\{ y    \,   \lvert     \,   \( \frac{y}{s} , e^{1/s} \) \in E \right\}  ,\right.  
s \in (0,+\infty ) ;\\
\ome_0 (x,t) = \( \frac{x}{\log t} , \frac{1}{\log t} \) ,\ 
(x,t) \in \R^n \times (1,+\infty) .
\end{align*}
Let $\cE_1$ and $\ome_1$ be as in Proposition \ref{B96Thm2.1} with $\al =1$.
Then, the following statements hold:
\begin{enumerate}[{\rm (1)}]
\item $E$ is $0$-parabolically convex if and only if $\tilde{E}$ is $1$-parabolically convex.

\item $\cE_0 ( s ; E) = \cE_1 ( s ; \tilde{E} )$ for any $s \in (0,+\infty)$.

\item $\ome_0 (E) = \ome_1 ( \tilde{E} )$.
\end{enumerate}
\end{rem}

\begin{cor}\label{B96Thm2.10}
Let $E$, $\cE_0$ and $\ome_0$ be as in Remark \ref{0conv=1conv}.
The following statements are equivalent:
\begin{enumerate}[{\rm (i)}]
\item $E$ is $0$-parabolically convex.

\item For any $s_0 ,s_1 \in (0,+\infty )$ and $\theta \in [0,1]$, $(1-\theta ) \cE_0 ( s_0 ; E) + \theta \cE_0 ( s_1 ;E ) \subset \cE_0 ( s_\theta ;E )$ holds.

\item $\ome_0 (E)$ is convex. 
\end{enumerate}
\end{cor}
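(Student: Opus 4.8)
The plan is to reduce Corollary \ref{B96Thm2.10} to Proposition \ref{B96Thm2.1} with $\al = 1$, applied not to $E$ itself but to the transformed set $\tilde{E} = \{ (x,\log t) \mid (x,t) \in E \}$, using the dictionary recorded in Remark \ref{0conv=1conv}. This is the same device already used to deduce Corollary \ref{conv=>0paraconv}. The first thing to note is that since $E \subset \R^n \times (1,+\infty)$, the logarithm carries the time interval $(1,+\infty)$ onto $(0,+\infty)$, so $\tilde{E} \subset \R^n \times (0,+\infty)$ and Proposition \ref{B96Thm2.1} is genuinely applicable to $\tilde{E}$.

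Next I would translate the three statements one by one. By Remark \ref{0conv=1conv} (1), statement (i) of the present corollary — that $E$ is $0$-parabolically convex — is equivalent to $\tilde{E}$ being $1$-parabolically convex, i.e. to statement (i) of Proposition \ref{B96Thm2.1} for $\tilde{E}$. By Remark \ref{0conv=1conv} (2) we have $\cE_0(s;E) = \cE_1(s;\tilde{E})$ for every $s \in (0,+\infty)$, and substituting this identity into the inclusion $(1-\theta)\cE_0(s_0;E) + \theta\,\cE_0(s_1;E) \subset \cE_0(s_\theta;E)$ shows that statement (ii) here is literally statement (ii) of Proposition \ref{B96Thm2.1} for $\tilde{E}$. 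Finally, Remark \ref{0conv=1conv} (3) gives $\ome_0(E) = \ome_1(\tilde{E})$, so statement (iii) here is exactly statement (iii) of Proposition \ref{B96Thm2.1} for $\tilde{E}$.

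With these three identifications in hand, Proposition \ref{B96Thm2.1} with $\al = 1$ applied to $\tilde{E}$ asserts the equivalence of its (i), (ii) and (iii), which is precisely the asserted equivalence of (i), (ii) and (iii) of Corollary \ref{B96Thm2.10}. I expect no real obstacle in this argument: it is pure bookkeeping once the definitions of $\cE_0$ and $\ome_0$ in Remark \ref{0conv=1conv} are granted. The only point deserving a moment's care is confirming that those definitions were chosen exactly so that the identities of Remark \ref{0conv=1conv} hold verbatim (so that, for instance, the condition $(y/s, e^{1/s}) \in E$ defining $\cE_0(s;E)$ matches $(y/s, (e^{1/s})^{\,?})$ after the substitution $t = e^{\log t}$ that defines $\tilde{E}$), and that the ambient time domain of $\tilde{E}$ is $(0,+\infty)$, which we checked above.
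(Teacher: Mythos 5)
Your proposal is correct and is essentially identical to the paper's own proof, which likewise invokes Remark \ref{0conv=1conv} to transfer each of (i), (ii), (iii) to the corresponding statement for $\tilde{E}$ and then applies Proposition \ref{B96Thm2.1} with $\al=1$. Your write-up simply makes explicit the three identifications that the paper leaves implicit.
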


\begin{proof}
Thanks to Remark \ref{0conv=1conv}, Proposition \ref{B96Thm2.1} with $\al =1$ completes the proof.
\qed
\end{proof}

The set $\tilde{E}$ in Remark \ref{0conv=1conv} is concretely given when $E$ is a convex cylinder.

\begin{rem}\label{tildeAI}
Let $A$ be a convex subset of $\R^n$, and $I$ an interval in $(1,+\infty)$.
Let $\tilde{{\color{white}E}}$ be the operator as in Remark \ref{0conv=1conv}.
Then, $\tilde{A \times I} = A \times \log I$.
\end{rem}

\begin{prop}\label{B96Cor2.1}
Let $A$ be a subset of $\R^n$, $I$ an interval in $(0,+\infty)$, and $\al \in \R$.
$A \times I$ is $\al$-parabolically convex if and only if $A$ is convex.
\end{prop}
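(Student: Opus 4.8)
The statement to prove is Proposition \ref{B96Cor2.1}: for $A \subset \R^n$, $I$ an interval in $(0,+\infty)$, and $\al \in \R$, the product $A \times I$ is $\al$-parabolically convex if and only if $A$ is convex. The plan is to prove both implications directly from the definition of $\al$-parabolic convexity, using only the observation that for any $t_0, t_1 \in I$ and $\la \in [0,1]$ one has $M_\al(t_0, t_1; \la) \in I$ (because $M_\al$ of two elements of an interval always lies between them, by the monotonicity of $p$-th means and the fact that $M_\al(t,t;\la) = t$, so $M_\al(t_0,t_1;\la)$ lies in $[\min\{t_0,t_1\}, \max\{t_0,t_1\}] \subset I$).

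First I would prove the ``if'' direction. Assume $A$ is convex. Take $(x_0, t_0), (x_1, t_1) \in A \times I$ and $\la \in [0,1]$. Then $x_\la = (1-\la)x_0 + \la x_1 \in A$ by convexity of $A$, and $M_\al(t_0, t_1; \la) \in I$ by the remark above; hence $(x_\la, M_\al(t_0,t_1;\la)) \in A \times I$, which is exactly the condition for $\al$-parabolic convexity.

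Next I would prove the ``only if'' direction. Assume $A \times I$ is $\al$-parabolically convex. Take $x_0, x_1 \in A$ and $\la \in [0,1]$; I need $x_\la \in A$. Pick any $t \in I$ (possible since $I$ is a nonempty interval), so that $(x_0, t), (x_1, t) \in A \times I$. Applying $\al$-parabolic convexity with these two points and this $\la$ gives $(x_\la, M_\al(t,t;\la)) \in A \times I$. Since $M_\al(t,t;\la) = t$ for every $\al$ and every $\la$ (check directly from the definition \eqref{pth-mean}: when $t^2 > 0$ each branch evaluates to $t$; and $I \subset (0,+\infty)$ so $t > 0$), this says $(x_\la, t) \in A \times I$, hence $x_\la \in A$. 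Thus $A$ is convex.

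I do not expect any real obstacle here; the only points requiring a word of care are the two elementary facts that $M_\al(t,t;\la) = t$ and that $M_\al(t_0,t_1;\la)$ stays inside the interval spanned by $t_0$ and $t_1$ — both of which follow immediately from \eqref{pth-mean} together with the monotonicity \eqref{M_monotonicity} of $p$-th means (or, for the second fact, just from $\min\{t_0,t_1\} = M_{-\infty}(t_0,t_1;\la) \le M_\al(t_0,t_1;\la) \le M_{+\infty}(t_0,t_1;\la) = \max\{t_0,t_1\}$ when $t_0 t_1 > 0$). So the proof is a short two-paragraph argument.
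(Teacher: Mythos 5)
Your proof is correct and follows essentially the same route as the paper, which disposes of the ``only if'' part by the slice observation (Remark \ref{paraconv=>conv}, i.e.\ $t=M_\al(t,t;\la)$) and the ``if'' part directly from the definition; you merely spell out the one detail the paper leaves implicit, namely that $M_\al(t_0,t_1;\la)$ stays between $t_0$ and $t_1$ and hence in $I$. Nothing further is needed.
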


\begin{proof}
The ``only if'' part follows from Remark \ref{paraconv=>conv}.
The ``if'' part follows from definition.
\qed
\end{proof}

\subsection{Parabolically power concave functions}

The notion of {\it parabolic power concavity} of a function was introduced in \cite{IS14Ann} (see also \cite{IS11}).
In this subsection, we slightly extend the notion and show several basic properties of parabolically power concave functions.

\begin{defi}\label{def_alpconc}
Let $\al \in \R$, $E$ an $\al$-parabolically convex set in $\R^n \times (0,+\infty )$, $\f$ a non-negative function defined on $E$, and $p \in \R \cup \{ \pm \infty \}$.
$\f$ is said to be {\it $\al$-parabolically $p$-concave} on $E$ if, for any $(x_0 ,t_0 ) , ( x_1 ,t_1) \in E$ and $\la \in [0,1]$, the inequality 
\begin{equation}
\label{ineq-alpconc}
\f \( x_\la  , M_\al \( t_0 , t_1 ; \la \) \) \geq M_p \( \f \( x_0 ,t_0  \) , \f \( x_1 ,t_1  \) ; \la \)
\end{equation}
holds.
$\f$ is said to be {\it strictly $\al$-parabolically $p$-concave} on $E$ if both of the following conditions hold:
\begin{enumerate}[(i)]
\item $\f$ is $\al$-parabolically $p$-concave on E.

\item Equality in \eqref{ineq-alpconc} holds if and only if any of the conditions $(x_0, t_0) = ( x_1 ,t_0)$, $\la =0$ or $\la =1$ holds.
\end{enumerate}
When $\al \neq 0$, $\f$ is said to be {\it almost-strictly $\al$-parabolically $p$-concave} on $E$ if both of the following conditions hold:
\begin{enumerate}[(i)]
\item $\f$ is $\al$-parabolically $p$-concave on E.

\item Equality in \eqref{ineq-alpconc} holds if and only if any of the conditions $x_0 /t_0^\al = x_1 / t_1^\al$, $\la =0$ or $\la =1$ holds.
\end{enumerate}
When $E \subset \R^n \times (1, + \infty )$, $\f$ is said to be {\it almost-strictly $0$-parabolically $p$-concave} on $E$ if both of the following conditions hold:
\begin{enumerate}[(i)]
\item $\f$ is $\al$-parabolically $p$-concave on E.

\item Equality in \eqref{ineq-alpconc} holds if and only if any of the conditions $x_0/ \log t_0 = x_1 / \log t_1$, $\la =0$ or $\la =1$ holds.
\end{enumerate}
\end{defi}

Similarly to Definition \ref{def_pconc}, $\al$-parabolic $0$-concavity and $\al$-parabolic $-\infty$-concavity are also called $\al$-parabolic log-concavity and $\al$-parabolic quasi-concavity, respectively.

For $\al \in \R$ and $p \in \R \cup \{ \pm \infty \}$, the composition of an $\al$-parabolically $p$-concave function with a homothety is $\al$-parabolically $p$-concave.
To be precise:

\begin{rem}\label{homo-alpconc}
Let $\al$, $E$, $\f$ and $p$ be as in Definition \ref{def_alpconc}.
Let $s \in \R \sm \{ 0 \}$, and $\tau \in (0,+\infty)$.
Put $E_{s, \tau} = \{ (x,t ) \in \R^n \times (0,+\infty )  \, \vert \, (s x  , \tau t ) \in E \}$.
Then, the following statements hold:
\begin{enumerate}[(1)]
\item If $\f$ is $\al$-parabolically $p$-concave (resp. strictly $\al$-parabolically $p$-concave) on $E$, then the function $E_{s, \tau} \ni (x,t) \mapsto \f ( sx ,\tau t)$ is $\al$-parabolically $p$-concave (resp. strictly $\al$-parabolically $p$-concave) on $E_{s, \tau}$.

\item If $\al \neq 0$ and $\f$ is almost-strictly $\al$-parabolically $p$-concave on $E$, then the function $E_{s, \tau} \ni (x,t) \mapsto \f ( sx ,\tau t)$ is almost-strictly $\al$-parabolically $p$-concave on $E_{s, \tau}$.
\end{enumerate}
\end{rem}

As we see in the next proposition, $0$-parabolically $p$-concave functions defined on a $0$-parabolically convex set generate $1$-parabolically $p$-concave functions defined on a $1$-parabolically convex set, and vice versa.

\begin{prop}\label{0pconc=1pconc}
Let $E$ be a $0$-parabolically convex subset of $\R^n \times (1, +\infty)$, $\f$ a non-negative function defined on $E$, and $p \in \R \cup \{ \pm \infty \}$.
Let $\tilde{E}$ be as in Remark \ref{0conv=1conv}.
Put 
\[
\tilde{\f} (x,t) = \f \( x, e^t \) ,\ (x,t) \in \tilde{E} .
\]
Then, $\f$ is $0$-parabolically $p$-concave (resp. almost-strictly/strictly $0$-parabolically $p$-concave) on $E$ if and only if $\tilde{\f}$ is $1$-parabolically $p$-concave (resp. almost-strictly/strictly $1$-parabolically $p$-concave) on $\tilde{E}$.
\end{prop}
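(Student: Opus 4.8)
The plan is to transport everything through the bijection $\iota\colon E\to\tilde E$, $\iota(x,t)=(x,\log t)$. By Remark \ref{0conv=1conv}(1), $\iota$ carries the $0$-parabolically convex set $E$ onto the $1$-parabolically convex set $\tilde E$, so $\tilde\f=\f\circ\iota^{-1}$ is well defined on a $1$-parabolically convex set, and what has to be checked is that $\iota$ matches the defining inequality \eqref{ineq-alpconc} for $\al=0$ with the one for $\al=1$, together with their equality cases.

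First I would fix $(x_0,t_0),(x_1,t_1)\in E$ and $\la\in[0,1]$, and put $s_i=\log t_i$, so that $(x_i,s_i)\in\tilde E$ and, crucially, $s_i>0$ because $E\subset\R^n\times(1,+\infty)$. The space variable is untouched, so the convex combination $x_\la$ is the same for the triple in $E$ and for its image in $\tilde E$. The only identity needed is \eqref{exp1=0exp} (equivalently \eqref{log0=1log}), which gives $\exp M_1(s_0,s_1;\la)=M_0(e^{s_0},e^{s_1};\la)=M_0(t_0,t_1;\la)$; hence, by the definition of $\tilde\f$,
\[
\tilde\f\bigl(x_\la,M_1(s_0,s_1;\la)\bigr)=\f\bigl(x_\la,M_0(t_0,t_1;\la)\bigr),\qquad \tilde\f(x_i,s_i)=\f(x_i,t_i).
\]
Consequently the inequality \eqref{ineq-alpconc} for $\tilde\f$ with $\al=1$ at $(x_0,s_0),(x_1,s_1)$ is literally the inequality \eqref{ineq-alpconc} for $\f$ with $\al=0$ at $(x_0,t_0),(x_1,t_1)$, and equality in one is equality in the other. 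Since $\iota$ is a bijection, quantifying over all admissible data for $\tilde\f$ is the same as quantifying over all admissible data for $\f$, which yields the equivalence of the (plain) $0$- and $1$-parabolic $p$-concavity statements.

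For the strict and almost-strict versions it remains to see that the equality conditions correspond under $\iota$. Since $\log$ is injective, the coincidence of the base points $(x_0,t_0)=(x_1,t_1)$ is equivalent to $(x_0,s_0)=(x_1,s_1)$, which settles the strict case; the conditions $\la=0$, $\la=1$ are obviously preserved. For the almost-strict case, the condition $x_0/\log t_0=x_1/\log t_1$ from the definition of almost-strict $0$-parabolic $p$-concavity becomes, after the substitution $s_i=\log t_i$, exactly $x_0/s_0=x_1/s_1$, i.e.\ $x_0/s_0^{1}=x_1/s_1^{1}$, which is the equality condition for almost-strict $1$-parabolic $p$-concavity. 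Combining this with the previous paragraph gives all three equivalences. There is no serious obstacle here: the whole argument is the elementary identity \eqref{exp1=0exp} plus the observation that $\iota$ fixes the space variable; the only thing to be careful about is the bookkeeping of the equality cases in Definition \ref{def_alpconc} — in particular that the $\al=1$ almost-strict condition $x_i/t_i^{\al}$ turns into the $\al=0$ almost-strict condition $x_i/\log t_i$ — and the fact that the hypothesis $E\subset\R^n\times(1,+\infty)$ is exactly what makes $s_i>0$ so that \eqref{log0=1log}--\eqref{exp1=0exp} apply.
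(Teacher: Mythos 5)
Your proposal is correct and follows exactly the route of the paper's own (very terse) proof: the paper simply invokes Remark \ref{0conv=1conv} (1) for the $1$-parabolic convexity of $\tilde{E}$ and the identities \eqref{log0=1log}--\eqref{exp1=0exp}, which is precisely the substitution $s_i=\log t_i$ you carry out in detail, including the matching of the strict and almost-strict equality cases.
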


\begin{proof}
By Remark \ref{0conv=1conv} (1), $\tilde{E}$ is $1$-parabolically convex.
The relations \eqref{log0=1log} and \eqref{exp1=0exp} complete the proof.
\qed
\end{proof}

We show that $p$-concave functions can generate $\al$-parabolically $p$-concave functions.

\begin{prop}\label{pconc=>alpconc}
Let $A$ be a convex set in $\R^n$, $f$ a non-negative function defined on $A$, $\al \in \R \sm \{ 0 \}$, and $p \in \R \cup \{ \pm \infty \}$. 
Let $\hat{A}_\al$ be as in Proposition \ref{conv=>paraconv}.
Put
\[
\hat{f}_{p, \al} (x,t) = 
\begin{cases}
\ds t^{\al/p} f \( \frac{x}{t^\al} \) & (p \neq 0 ) ,\\
\ds \exp \( t^\al \log f \( \frac{x}{t^\al}  \) \) &( p =0 ) ,
\end{cases}
\ (x,t) \in \hat{A}_\al .
\]
If $f$ is $p$-concave (resp. strictly $p$-concave) on $A$, then $\hat{f}_{p, \al}$ is $\al$-parabolically $p$-concave (resp. almost-strictly $\al$-paraboclially $p$-concave) on $\hat{A}_\al$.
\end{prop}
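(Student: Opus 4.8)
The plan is to reduce inequality \eqref{ineq-alpconc} for $\hat{f}_{p,\al}$ at a pair of points of $\hat{A}_\al$ to the defining inequality of $p$-concavity for $f$ at the corresponding pair of points of $A$, via the convex combination \eqref{combination}. First I would fix $(x_0,t_0),(x_1,t_1)\in\hat{A}_\al$ and $\la\in[0,1]$, and set $z_i=x_i/t_i^\al\in A$, $t^*_\la=M_\al(t_0,t_1;\la)$, and $\theta=\la t_1^\al/(t^*_\la)^\al$. Since $(t^*_\la)^\al=(1-\la)t_0^\al+\la t_1^\al$ and $t_0^\al,t_1^\al>0$, one gets $\theta\in[0,1]$, with $\theta\in(0,1)$ precisely when $\la\in(0,1)$ and $\theta\in\{0,1\}$ precisely when $\la\in\{0,1\}$; moreover $(1-\theta)(t^*_\la)^\al=(1-\la)t_0^\al$, $\theta(t^*_\la)^\al=\la t_1^\al$, and by \eqref{combination} one has $x_\la/(t^*_\la)^\al=(1-\theta)z_0+\theta z_1=:z_\theta\in A$ (so that $(x_\la,t^*_\la)\in\hat{A}_\al$, as already guaranteed by Proposition~\ref{conv=>paraconv}).

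For the first assertion I would argue case by case in $p$, with $p\in(0,+\infty)$ as the representative case. If $f(z_0)f(z_1)>0$, the key computation is
\begin{align*}
\hat{f}_{p,\al}(x_\la,t^*_\la)^p
&=(t^*_\la)^\al f(z_\theta)^p
\geq(t^*_\la)^\al\big((1-\theta)f(z_0)^p+\theta f(z_1)^p\big)\\
&=(1-\la)t_0^\al f(z_0)^p+\la t_1^\al f(z_1)^p
=(1-\la)\hat{f}_{p,\al}(x_0,t_0)^p+\la\hat{f}_{p,\al}(x_1,t_1)^p ,
\end{align*}
where the inequality is the $p$-concavity of $f$ applied at $(z_0,z_1,\theta)$; this gives \eqref{ineq-alpconc}. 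If $f(z_0)f(z_1)=0$, the right-hand side of \eqref{ineq-alpconc} vanishes and there is nothing to prove. The remaining cases are analogous: for $p\in(-\infty,0)$ one reverses the order when passing between the $p$-th and the $(1/p)$-th power; for $p=0$ one takes logarithms and uses $\hat{f}_{0,\al}=\exp(t^\al\log f(\cdot/t^\al))$; and for $p=\pm\infty$ one has $\hat{f}_{p,\al}(x,t)=f(x/t^\al)$ and $M_{\pm\infty}(a,b;\cdot)$ is independent of the ratio, so the discrepancy between $\la$ and $\theta$ does no harm.

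For the second assertion, assume in addition that $f$ is strictly $p$-concave. The ``if'' direction of the equality condition is a direct check: if $z_0=z_1=:z$ then $z_\theta=z$ and $x_\la=(t^*_\la)^\al z$, and substituting into the computation above (with every ``$\geq$'' an equality) shows both sides of \eqref{ineq-alpconc} equal $(t^*_\la)^{\al/p}f(z)$ (resp.\ $\exp((t^*_\la)^\al\log f(z))$ when $p=0$); and if $\la\in\{0,1\}$ equality in \eqref{ineq-alpconc} is automatic. For the converse, suppose equality holds in \eqref{ineq-alpconc} with $\la\in(0,1)$, so $\theta\in(0,1)$. If $f(z_0)f(z_1)>0$, equality in \eqref{ineq-alpconc} forces equality in the middle step of the displayed computation, i.e.\ $f(z_\theta)^p=(1-\theta)f(z_0)^p+\theta f(z_1)^p$, which is $f(z_\theta)=M_p(f(z_0),f(z_1);\theta)$; if $f(z_i)=0$ for some $i$, equality in \eqref{ineq-alpconc} forces $\hat{f}_{p,\al}(x_\la,t^*_\la)=0$, hence $f(z_\theta)=0=M_p(f(z_0),f(z_1);\theta)$. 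In either case there is equality in the $p$-concavity inequality for $f$ at $(z_0,z_1,\theta)$ with $\theta\in(0,1)$, so strict $p$-concavity of $f$ gives $z_0=z_1$, i.e.\ $x_0/t_0^\al=x_1/t_1^\al$ --- which is exactly the equality condition for almost-strict $\al$-parabolic $p$-concavity. The cases $p\in(-\infty,0)$, $p=0$ and $p=\pm\infty$ are handled the same way.

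\textbf{Main obstacle.} No step here is deep: everything rests on the weight identities for $\theta$ and a single use of the $p$-concavity of $f$. The actual work is bookkeeping --- running the $p$-by-$p$ case split (the formulas for $\hat{f}_{p,\al}$ and for $M_p$ genuinely differ at $p=0$ and at $p=\pm\infty$) together with the sub-case in which $f$ vanishes at an endpoint, all the way through the equality-propagation argument without leaving a gap. The one point that must be verified explicitly is that $\la\in(0,1)$ forces $\theta\in(0,1)$ (this uses $t_0,t_1>0$), since this is what licenses the appeal to the \emph{strict} $p$-concavity of $f$.
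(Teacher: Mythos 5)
Your proposal is correct and follows essentially the same route as the paper: both proofs reduce \eqref{ineq-alpconc} to the $p$-concavity of $f$ at the points $x_i/t_i^\al$ with the reweighted ratio $\theta=\la t_1^\al/M_\al(t_0,t_1;\la)^\al$ coming from the convex combination \eqref{combination}. The paper only writes out the inequality chain for $p\neq 0$ and leaves the remaining cases and the equality analysis implicit, so your explicit treatment of the degenerate cases and of the almost-strictness is just a more detailed rendering of the same argument.
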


\begin{proof}
We give a proof for the case where $p \neq 0$.
The argument in the case where $p =0$ goes parallel.

Let $(x_0 ,t_0) , (x_1, t_1) \in \hat{A}_\al$, and $\la \in [0,1]$.
Using the convex combination \eqref{combination}, the $p$-concavity of $f$ implies
\begin{align*}
\hat{f}_{p, \al}  \( x_\la , M_\al \( t_0 ,t_1 ;\la \) \) 
&= M_\al \( t_0, t_1 ; \la \)^{\al / p}  f    \( \frac{(1-\la) t_0^\al}{M_\al \( t_0, t_1 ;\la \)^\al}  \frac{x_0}{t_0^\al}  + \frac{\la t_1^\al}{M_\al \( t_0, t_1 ;\la \)^\al}  \frac{x_1}{t_1^\al}  \)  \\
&\geq   M_\al \( t_0, t_1 ; \la \)^{\al / p}    M_p \( f   \(    \frac{x_0}{t_0^\al}    \)  , f \(    \frac{x_1}{t_1^\al}    \) ; \frac{\la t_1^\al}{M_\al \( t_0, t_1 ;\la \)^\al} \)   \\
&= M_p \( \hat{f}_{p, \al}  \( x_0, t_0 \) , \hat{f}_{p, \al}  \( x_1, t_1 \) ; \la \) .
\end{align*}
\qed
\end{proof}

\begin{cor}
Let $A$, $f$ and $p$ be as in Proposition \ref{pconc=>alpconc}.
Let $\hat{f}_{p, 1}$ be as in Proposition \ref{pconc=>alpconc} with $\al =1$.
Let $\hat{A}_0$ be as in Corollary \ref{conv=>0paraconv}.
Put 
\[
\hat{f}_{p, 0} (x, t ) = \hat{f}_{p, 1}  \( x, \log t \) ,\
(x,t) \in \hat{A}_0 .
\]
If $f$ is $p$-concave (resp. strictly $p$-concave)on $A$, then $\hat{f}_{p, 0}$ is $0$-parabolically $p$-concave (resp. almost-strictly $0$-parabolically $p$-concave) on $\hat{A}_0$.
\end{cor}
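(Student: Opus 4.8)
The plan is to deduce this corollary by combining Proposition \ref{pconc=>alpconc} (in the case $\al = 1$) with the change of variables $t \mapsto e^t$ recorded in Proposition \ref{0pconc=1pconc}. First I would note that $\hat{A}_0$ is $0$-parabolically convex by Corollary \ref{conv=>0paraconv}, so the assertion is meaningful, and I would pin down the relevant set identity: with the operator $\tilde{\,\cdot\,}$ of Remark \ref{0conv=1conv} applied to $E = \hat{A}_0$, one has $\tilde{E} = \{ (x, \log t) \mid (x,t) \in \hat{A}_0 \}$, and by Remark \ref{A0A1} (1) the condition $(x,t) \in \hat{A}_0$ is equivalent to $(x, \log t) \in \hat{A}_1$. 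Since $\log$ maps $(1, +\infty)$ bijectively onto $(0,+\infty)$, this gives $\tilde{E} = \hat{A}_1$.

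Next I would check that $\hat{f}_{p,0}$ is exactly the function attached to $\hat{f}_{p,1}$ by the correspondence in Proposition \ref{0pconc=1pconc}. Taking $\f = \hat{f}_{p,0}$ on $E = \hat{A}_0$ in that proposition, the associated function on $\tilde{E} = \hat{A}_1$ is $\tilde{\f}(x,t) = \f(x, e^t) = \hat{f}_{p,0}(x, e^t) = \hat{f}_{p,1}(x, \log e^t) = \hat{f}_{p,1}(x,t)$; hence $\tilde{\f} = \hat{f}_{p,1}$ on $\hat{A}_1$.

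Finally I would invoke Proposition \ref{pconc=>alpconc} with $\al = 1$: if $f$ is $p$-concave (resp. strictly $p$-concave) on $A$, then $\hat{f}_{p,1} = \tilde{\f}$ is $1$-parabolically $p$-concave (resp. almost-strictly $1$-parabolically $p$-concave) on $\hat{A}_1 = \tilde{E}$, and Proposition \ref{0pconc=1pconc} transfers this back to $\f = \hat{f}_{p,0}$, yielding $0$-parabolic $p$-concavity (resp. almost-strict $0$-parabolic $p$-concavity) on $\hat{A}_0$, as claimed. I do not anticipate a genuine obstacle: the argument is a chaining of results already established. The only point needing care is to track the three notions of strictness consistently through Propositions \ref{pconc=>alpconc} and \ref{0pconc=1pconc}, and to confirm that the $t \mapsto \log t$ built into the definition of $\hat{f}_{p,0}$ (and into Remark \ref{A0A1} (1)) is precisely the inverse of the $t \mapsto e^t$ occurring in Proposition \ref{0pconc=1pconc}, so that the equality $\tilde{\f} = \hat{f}_{p,1}$ holds on the nose.
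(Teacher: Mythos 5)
Your proposal is correct and follows exactly the paper's own route: the paper's proof is the one-line ``Propositions \ref{pconc=>alpconc} with $\al=1$ and \ref{0pconc=1pconc} complete the proof (see also Remark \ref{A0A1})'', and you have simply spelled out the same chain, verifying $\tilde{E}=\hat{A}_1$ and $\tilde{\f}=\hat{f}_{p,1}$ before transferring the (almost-strict) $1$-parabolic $p$-concavity back via the $t\mapsto e^t$ correspondence. No gaps; the extra care you take in checking that the two changes of variable are mutually inverse is exactly the detail the paper leaves implicit.
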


\begin{proof}
Propositions \ref{pconc=>alpconc} with $\al =1$ and \ref{0pconc=1pconc} complete the proof (see also Remark \ref{A0A1}).
\qed
\end{proof}

We show that Proposition \ref{pconc=>alpconc} constructs radially symmetric parabolically power concave functions.

\begin{prop}\label{rsymm}
Let $\kappa$ be a non-negative function defined on $[0,+\infty ) \times (0,+\infty)$, $\al \in \R \sm \{ 0 \}$, $p \in \R  \cup \{ \pm \infty \}$, and $\tau \in (0,+\infty)$.
We consider the following conditions for $\kappa$:
\begin{enumerate}[{\rm (i)}]
\item For any $(r,t) \in [0,+\infty ) \times (0,+\infty )$, we have
\[
\kappa  (r,t) =
\begin{cases}
\ds t^{\al /p} \kappa  \(  \frac{r}{t^\al} , \tau \)  &( p \neq 0 ) , \\
\ds  \exp \(  t^\al  \log   \kappa  \(  \frac{r}{t^\al}  , \tau \) \)   &(p=0) .
\end{cases}
\]

\item $\kappa ( \cdot , \tau )$ is $p$-concave (resp. strictly $p$-concave) on $[0,+\infty )$.

\item For each $t \in (0,+\infty )$, $\kappa (\cdot ,t )$ is decreasing (resp. strictly decreasing) on $[0,+\infty )$.
\end{enumerate}
Put
\[
\kappa^\circ   (x,t) = \kappa    \( \vert x \vert , t \)  ,\ (x,t) \in \R^n \times (0,+\infty) .
\]
Then, the following statements hold:
\begin{enumerate}[{\rm (1)}]
\item If {\rm (i)} and {\rm (ii)} are satisfied, then $\kappa$ is $\al$-parabolically $p$-concave (resp. almost-strictly $\al$-parabolically $p$-concave) on $[0,+\infty ) \times (0,+\infty)$.

\item If {\rm (i)}, {\rm (ii)} and {\rm (iii)} are satisfied, then $\kappa^\circ$ is $\al$-parabolically $p$-concave (resp. almost-strictly $\al$-parabolically $p$-concave) on $\R^n \times (0,+\infty)$.
\end{enumerate}
\end{prop}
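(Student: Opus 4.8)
The plan is to reduce everything to Proposition \ref{pconc=>alpconc}. For part (1), observe that condition (i) says precisely that $\kappa$ equals $\widehat{(\kappa(\cdot,\tau))}_{p,\al}$ after rescaling the time variable by $\tau$. More carefully, set $f = \kappa(\cdot,\tau) : [0,+\infty) \to [0,+\infty)$, which is $p$-concave (resp. strictly $p$-concave) on the convex set $A = [0,+\infty)$ by (ii). Proposition \ref{pconc=>alpconc} (applied with $n=1$, noting that its proof only uses convexity of $A$ and never the ambient dimension) gives that $\hat{f}_{p,\al}$ is $\al$-parabolically $p$-concave (resp. almost-strictly $\al$-parabolically $p$-concave) on $\hat{A}_\al = [0,+\infty)\times(0,+\infty)$, where we use $\hat{A}_\al = A \times (0,+\infty)$ since $A=[0,+\infty)$ is a convex cone (Proposition \ref{cone}). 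It remains only to identify $\kappa$ with a rescaling of $\hat{f}_{p,\al}$. From (i) one checks, by substituting $t\mapsto \tau^{1/\al} t$ or directly comparing formulas, that $\kappa(x,t) = \hat{f}_{p,\al}(x, \tau^{-1/\al} t)$ up to the constant factor coming from the normalization at $t=\tau$; either way the relation between $\kappa$ and $\hat f_{p,\al}$ is exactly of the form $(x,t)\mapsto \hat f_{p,\al}(sx,\tau' t)$ treated in Remark \ref{homo-alpconc}, which preserves $\al$-parabolic $p$-concavity and almost-strict $\al$-parabolic $p$-concavity. This finishes (1).

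For part (2), the plan is to combine (1) with a radialization argument like the one in the proof of Lemma \ref{str_quasi-conc}(2). Given $(x_0,t_0),(x_1,t_1) \in \R^n\times(0,+\infty)$ and $\la\in[0,1]$, set $r_i = |x_i|$. The key geometric inequality is $|x_\la| \le r_\la = (1-\la)r_0 + \la r_1$, with equality iff $x_0$ and $x_1$ are non-negatively proportional. By (iii), $\kappa(\cdot, M_\al(t_0,t_1;\la))$ is decreasing, so
\[
\kappa^\circ\big(x_\la, M_\al(t_0,t_1;\la)\big) = \kappa\big(|x_\la|, M_\al(t_0,t_1;\la)\big) \ge \kappa\big(r_\la, M_\al(t_0,t_1;\la)\big),
\]
and then part (1) applied to the pair $(r_0,t_0),(r_1,t_1)\in[0,+\infty)\times(0,+\infty)$ gives
\[
\kappa\big(r_\la, M_\al(t_0,t_1;\la)\big) \ge M_p\big(\kappa(r_0,t_0),\kappa(r_1,t_1);\la\big) = M_p\big(\kappa^\circ(x_0,t_0),\kappa^\circ(x_1,t_1);\la\big).
\]
This proves $\al$-parabolic $p$-concavity of $\kappa^\circ$. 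For the almost-strict version, suppose equality holds throughout with $\la\in(0,1)$. Then the second displayed inequality being an equality forces, by almost-strictness from part (1), that $r_0/t_0^\al = r_1/t_1^\al$; and the first being an equality forces (since $\kappa(\cdot,M_\al(t_0,t_1;\la))$ is strictly decreasing) $|x_\la| = r_\la$, hence $x_0 = s x_1$ for some $s\ge 0$. Combining $|x_0| = r_0 = (t_0/t_1)^\al r_1 = (t_0/t_1)^\al |x_1|$ with non-negative proportionality yields $x_0/t_0^\al = x_1/t_1^\al$, which is the required equality condition. Conversely, if $x_0/t_0^\al = x_1/t_1^\al$ one checks both inequalities above become equalities. (The case $r_1 = 0$, where "$x_0 = sx_1$" must be replaced by "$x_1 = s x_0$", is handled symmetrically, and if both $r_i = 0$ then $x_0/t_0^\al = x_1/t_1^\al = 0$ automatically.)

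The main obstacle I anticipate is purely bookkeeping rather than conceptual: correctly tracking the time-rescaling by $\tau$ in condition (i) so that it matches the normalization built into $\hat f_{p,\al}$ (which is pinned at $t=1$, not $t=\tau$), and making sure the constant factors and the exponent $\al/p$ versus the exponential expression for $p=0$ are threaded consistently through Remark \ref{homo-alpconc}. The second delicate point is the equality analysis in part (2): one must be careful that the degenerate cases $r_0 = 0$ or $r_1 = 0$ do not spoil the deduction "$|x_\la| = r_\la \Rightarrow$ non-negative proportionality," but in those cases the conclusion $x_0/t_0^\al = x_1/t_1^\al$ holds trivially or by the symmetric form of the argument, so no genuine difficulty arises. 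As in Proposition \ref{pconc=>alpconc}, the case $p=0$ runs in exact parallel to $p\neq 0$ after replacing powers by exponentials, so I would treat $p\neq 0$ in detail and remark that $p=0$ is analogous.
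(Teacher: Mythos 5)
Your proof is correct and follows essentially the same route as the paper: part (1) is exactly the paper's reduction to Proposition \ref{pconc=>alpconc} with $n=1$ and $A=[0,+\infty)$ a convex cone (via Proposition \ref{cone}), and part (2) is the paper's radialization via $\vert x_\la \vert \le r_\la$ together with condition (iii) and part (1). Two minor remarks: the time-rescaling by $\tau$ you worry about is a non-issue, since condition (i) literally says $\kappa = \hat{f}_{p,\al}$ with $f = \kappa(\cdot,\tau)$, so Remark \ref{homo-alpconc} is not needed; and your equality analysis in (2), including the degenerate cases $r_0=0$ or $r_1=0$, is more careful than the paper's, which only records that equality in the radialization step forces $x_0 = s x_1$ for some positive $s$.
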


\begin{proof}
(1) In Proposition \ref{pconc=>alpconc}, put $n=1$, $A=[0, +\infty)$, and $f = \kappa (\cdot ,\tau)$.
Since $A$ is a convex cone in $\R$, by Proposition \ref{cone}, we have $\hat{A}_\al = A \times (0,+\infty)$.
Thus, Proposition \ref{pconc=>alpconc} completes the proof.

(2) Let $(x_0,t_0), (x_1, t_1) \in \R^n \times (0,+\infty)$, and $\la \in [0,1]$.
Put $r_0 = \vert x_0 \vert$ and $r_1 = \vert x_1 \vert$. 
By the condition (iii) and $\vert x_\la \vert \leq r_\la$, we have 
\[
\kappa^\circ \( x_\la , M_\al \( t_0 ,t_1 ; \la \) \) =  \kappa  \( \lvert x_\la \rvert  , M_\al \( t_0 ,t_1 ; \la \)  \) 
\geq  \kappa \( r_\la , M_\al \( t_0 ,t_1 ; \la \)  \) .
\]
When the condition (iii) is strictly satisfied, equality holds if and only if there exists a positive $s$ such that $x_0 = s x_1$.
Thus, the $\al$-parabolic $p$-concavity of $\kappa$ shown in (1) completes the proof.
\qed
\end{proof}

\begin{cor}\label{0para_rsymm}
Let $\kappa$, $p$, $\tau$ and $\circ$ be as in Proposition \ref{rsymm}.
Put 
\[
\kappa_0 (r,t) = \kappa ( r , \log t) ,\ (r,t) \in [0,+\infty ) \times (1,+\infty) .
\]
Then, the following statements hold:
\begin{enumerate}[{\rm (1)}]
\item If {\rm (i)} and {\rm (ii)} in Proposition \ref{rsymm} with $\al =1$ are satisfied, then $\kappa_0$ is $0$-parabolically $p$-concave (resp. almost-strictly $0$-parabolically $p$-concave) on $[0,+\infty ) \times (1 , +\infty)$.

\item If {\rm (i)}, {\rm (ii)} and {\rm (iii)} in Proposition \ref{rsymm} with $\al =1$ are satisfied, then $\kappa_0^\circ$ is $0$-parabolically $p$-concave (resp. almost-strictly $0$-parabolically $p$-concave) on $\R^n \times (1 , +\infty)$.
\end{enumerate}
\end{cor}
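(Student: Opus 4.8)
The plan is to deduce Corollary~\ref{0para_rsymm} from Proposition~\ref{rsymm} with $\al=1$ by composing with the exponential change of the time variable, exactly as packaged in Proposition~\ref{0pconc=1pconc}. First I would record the bookkeeping. For $\kappa_0(r,t)=\kappa(r,\log t)$ on $[0,+\infty)\times(1,+\infty)$, the function $\tilde{\kappa_0}$ of Proposition~\ref{0pconc=1pconc} is $\tilde{\kappa_0}(r,t)=\kappa_0(r,e^t)=\kappa(r,\log e^t)=\kappa(r,t)$, so $\tilde{\kappa_0}=\kappa$ on $[0,+\infty)\times(0,+\infty)$; likewise $\tilde{\kappa_0^\circ}=\kappa^\circ$ on $\R^n\times(0,+\infty)$. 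Moreover $[0,+\infty)\times(1,+\infty)$ and $\R^n\times(1,+\infty)$ are $0$-parabolically convex (Proposition~\ref{B96Cor2.1} with $\al=0$; for the former one may also note that $A=[0,+\infty)$ is a convex cone, so $[0,+\infty)\times(1,+\infty)=\hat{A}_0$ by Corollary~\ref{cone0}, with $\hat{A}_0$ the set of Corollary~\ref{conv=>0paraconv}), and their images under $(x,t)\mapsto(x,\log t)$ are $[0,+\infty)\times(0,+\infty)$ and $\R^n\times(0,+\infty)$, respectively.

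For part (1), assuming conditions (i) and (ii) of Proposition~\ref{rsymm} with $\al=1$, part (1) of that proposition gives that $\kappa$ is $1$-parabolically $p$-concave (resp. almost-strictly $1$-parabolically $p$-concave) on $[0,+\infty)\times(0,+\infty)$. Since $\kappa=\tilde{\kappa_0}$, Proposition~\ref{0pconc=1pconc} then yields that $\kappa_0$ is $0$-parabolically $p$-concave (resp. almost-strictly $0$-parabolically $p$-concave) on $[0,+\infty)\times(1,+\infty)$. For part (2), assuming in addition condition (iii) of Proposition~\ref{rsymm} with $\al=1$, part (2) of that proposition gives that $\kappa^\circ$ is $1$-parabolically $p$-concave (resp. almost-strictly $1$-parabolically $p$-concave) on $\R^n\times(0,+\infty)$; since $\kappa^\circ=\tilde{\kappa_0^\circ}$, Proposition~\ref{0pconc=1pconc} applied with $E=\R^n\times(1,+\infty)$ delivers the claim for $\kappa_0^\circ$.

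There is no genuine obstacle here; the one point that needs a word of care is that Proposition~\ref{0pconc=1pconc} is stated for sets $E\subset\R^n\times(1,+\infty)$, while in part (1) the spatial slice is the half-line $[0,+\infty)$ rather than all of $\R$. This is harmless, since the proof of Proposition~\ref{0pconc=1pconc} uses only the identities \eqref{log0=1log} and \eqref{exp1=0exp}, which are unaffected by restricting the spatial variable, so the same equivalence holds verbatim over $[0,+\infty)\times(1,+\infty)$. Alternatively, one invokes Corollary~\ref{cone0} to present $[0,+\infty)\times(1,+\infty)$ as $\hat{A}_0$ for the convex cone $A=[0,+\infty)$ and argues within that description from the outset.
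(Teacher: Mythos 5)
Your proposal is correct and follows essentially the same route as the paper: apply Proposition \ref{rsymm} with $\al=1$ and then transfer $1$-parabolic to $0$-parabolic concavity via the exponential change of time variable packaged in Proposition \ref{0pconc=1pconc} (the paper's one-line proof cites exactly these two results, together with Remarks \ref{A0A1} and \ref{tildeAI} for the bookkeeping you spell out). Your extra remark about the spatial slice $[0,+\infty)$ in part (1) is a sensible point of care that the paper handles implicitly through the same cone observation you mention.
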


\begin{proof}
Propositions \ref{rsymm} with $\al =1$ and \ref{0pconc=1pconc} complete the proof (see also Remarks \ref{A0A1} and \ref{tildeAI}).
\qed
\end{proof}

Conversely, $\al$-parabolically $p$-concave functions naturally generate $p$-concave functions since $\tau = M_\al ( \tau , \tau ; \la )$.

\begin{rem}\label{alpconc=>pconc}
Let $\al \in \R$, $E$ an $\al$-parabolically convex set in $\R^n \times (0,+\infty )$, $\f$ a non-negative function defined on $E$, $p \in \R \cup \{ \pm \infty \}$, and $\tau \in (0,+\infty)$.
Let $\check{E}$ be as in Remark \ref{paraconv=>conv}. 
Suppose $\check{E} (\tau) \neq \emptyset$.
Put 
\[
\check{\f}_\tau   (x) = \f (x, \tau ) ,\
x \in \check{E} (\tau) .
\]
If $\f$ is $\al$-parabolically $p$-concave (resp. strictly/almost-strictly $\al$-parabolically $p$-concave) on $E$, then $\check{\f}_\tau$ is $p$-concave (resp. strictly $p$-concave) on $\check{E}   (\tau)$.
\end{rem}

\section{Main theorem and its applications}

\subsection{Lemmas for the main theorem} 

\begin{lem}\label{concavityPhi}
Let $I$ be an interval in $(0,+\infty)$, and $\f$ a non-negative function defined on $\R^n \times I$, $\al \in \R$, and $p \in \R \cup \{ \pm \infty \}$.
Put
\[
\Phi  (x, y , t) = \f ( x-y , t )  ,\
(x, y ,t) \in \R^n \times \R^n \times I  .
\]
If $\f$ is $\al$-parabolically $p$-concave on $\R^n \times I$, then $\Phi$ is $\al$-parabolically $p$-concave on $\R^n \times \R^n \times I$.
\end{lem}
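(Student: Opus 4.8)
The plan is to deduce the $\al$-parabolic $p$-concavity of $\Phi$ directly from that of $\f$, using only the fact that the difference map $\R^n \times \R^n \ni (x,y) \mapsto x-y \in \R^n$ is affine and hence commutes with forming convex combinations. First I would record that the domains involved are legitimate: viewing $\R^n \times \R^n$ as the convex set $\R^{2n}$, Proposition \ref{B96Cor2.1} shows that both $\R^n \times I$ and $\R^n \times \R^n \times I$ are $\al$-parabolically convex, so the hypothesis and the desired conclusion are well posed.

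For the inequality itself, fix $(x_0,y_0,t_0),(x_1,y_1,t_1)\in\R^n\times\R^n\times I$ and $\la\in[0,1]$, and put $z_i=x_i-y_i$ for $i=0,1$. The one computation that matters is the affine identity
\[
x_\la-y_\la=(1-\la)(x_0-y_0)+\la(x_1-y_1)=(1-\la)z_0+\la z_1 ,
\]
whose right-hand side I abbreviate as $z_\la$. In particular the point $\bigl(z_\la,M_\al(t_0,t_1;\la)\bigr)$ is exactly the one appearing on the left-hand side of \eqref{ineq-alpconc} for the pair $(z_0,t_0),(z_1,t_1)\in\R^n\times I$, and
\[
\Phi\bigl(x_\la,y_\la,M_\al(t_0,t_1;\la)\bigr)=\f\bigl(z_\la,M_\al(t_0,t_1;\la)\bigr).
\]

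Now I would invoke the hypothesis: applying the defining inequality \eqref{ineq-alpconc} of $\al$-parabolic $p$-concavity to $\f$ at the points $(z_0,t_0),(z_1,t_1)$ with ratio $\la$ gives
\[
\f\bigl(z_\la,M_\al(t_0,t_1;\la)\bigr)\ \geq\ M_p\bigl(\f(z_0,t_0),\f(z_1,t_1);\la\bigr),
\]
and since $\f(z_i,t_i)=\f(x_i-y_i,t_i)=\Phi(x_i,y_i,t_i)$ by the definition of $\Phi$, chaining the last three displays yields
\[
\Phi\bigl(x_\la,y_\la,M_\al(t_0,t_1;\la)\bigr)\ \geq\ M_p\bigl(\Phi(x_0,y_0,t_0),\Phi(x_1,y_1,t_1);\la\bigr),
\]
which is precisely \eqref{ineq-alpconc} for $\Phi$, completing the argument.

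I do not expect any genuine obstacle here: the whole content is the single affine identity above, and the only thing to watch is the bookkeeping with the two vector variables $x$ and $y$ when checking that differences pass through the convex combination unchanged. I would remark, for later use, that this argument does \emph{not} transfer strictness or almost-strictness, since $z_0=z_1$ does not force $(x_0,y_0)=(x_1,y_1)$; but the lemma as stated only asserts the non-strict version.
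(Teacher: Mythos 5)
Your proof is correct and follows essentially the same route as the paper's: both reduce the claim to the affine identity $x_\la - y_\la = (1-\la)(x_0-y_0)+\la(x_1-y_1)$ and then apply the defining inequality of $\al$-parabolic $p$-concavity of $\f$ at the points $(x_i-y_i,t_i)$. Your closing remark that strictness does not transfer is consistent with the paper, which handles the equality case separately in Remark \ref{equalityPhi}.
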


\begin{proof}
Let $(x_0,  y_0 , t_0), (x_1 , y_1 , t_1 ) \in \R^n \times \R^n \times I$, and $\la \in [0,1]$.
Since $\f$ is $\al$-parabolically $p$-concave on $\R^n \times I$, we have
\begin{align*}
\Phi \( x_\la  ,y_\la , M_\al \( t_0 , t_1 ; \la \) \)    
&= \f \( (1-\la ) \( x_0 -y_0 \) + \la \( x_1-y_1\) , M_\al \( t_0 , t_1 ; \la \) \)   \\ 
&\geq M_p \( \f \( x_0 -y_0 , t_0 \) , \f \( x_1 -y_1 , t_1 \) ; \la \)    \\
&= M_p \( \Phi  \( x_0, y_0 ,t_0 \)  , \Phi   \( x_1 , y_1 ,t_1 \) ; \la \) .
\end{align*}
\qed
\end{proof}

\begin{rem}\label{equalityPhi}
Let $I$, $\f$, $\al$, $p$ and $\Phi$ be as in Lemma \ref{concavityPhi}.
Suppose that $\f$ is almost-strictly $\al$-parabolically $p$-concave on $\R^n \times I$.
Then, the following statements hold:
\begin{enumerate}[(1)]
\item We have
\begin{align*}
\Phi  \( x_\la , y_\la ,  M_\al \( t_0 , t_1 ; \la \) \)  
=M_p \( \Phi \( x_0 , y_0 ,t_0  \)  , \Phi   \( x_1 , y_1 ,t_1 \) ; \la \) \\
\iff
\begin{cases}
\ds \frac{x_0 -y_0}{t_0^\al} = \frac{x_1-y_1}{t_1^\al}    & ( \al \neq 0 ) ,\\
\ds \frac{x_0 -y_0}{\log t_0} = \frac{x_1-y_1}{\log t_1}  & ( \al = 0 ) . 
\end{cases}
\end{align*}

\item For each $(x,t) \in \R^n \times I$, $\Phi (x ,  \cdot  ,t)$ is strictly $p$-concave on $\R^n$ (see also Remarks \ref{composition} and \ref{alpconc=>pconc}).
\end{enumerate}
\end{rem}

\begin{lem}\label{inequalityS}
Let $\Phi_0 , \Phi_1 \in [0, + \infty )$, and $\psi$ a non-negative function defined on $\R^n$.
Let $p$ and $q \in \R \cup \{ \pm \infty \}$.
Let $\ell$ be as in \eqref{Lemma10.1ell}.
Suppose that $\psi$ is $q$-concave on $\R^n$, and that $p +q \geq 0$.
Then, for any $y_0 , y_1  \in  \R^n$ and $\la \in [0,1]$, we have
\[
M_p \( \Phi_0 , \Phi_1 ; \la \) \psi \( y_\la \)
\geq M_\ell \( \Phi_0 \psi \( y_0  \) , \Phi_1 \psi \( y_1  \) ; \la \)  .
\]
\end{lem}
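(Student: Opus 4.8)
The plan is to derive the claimed inequality by combining a single application of the $q$-concavity of $\psi$ with the H\"older-type inequality \eqref{Lemma10.1} for power means; no new estimate is required.

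First I would fix $y_0, y_1 \in \R^n$ and $\la \in [0,1]$ and use the $q$-concavity of $\psi$ at $y_0$ and $y_1$ to get
\[
\psi(y_\la) \geq M_q ( \psi(y_0) , \psi(y_1) ; \la ) \geq 0 .
\]
Since $M_p ( \Phi_0 , \Phi_1 ; \la ) \geq 0$ by \eqref{pth-mean}, multiplying the previous inequality by $M_p ( \Phi_0 , \Phi_1 ; \la )$ preserves the order, so that
\[
M_p ( \Phi_0 , \Phi_1 ; \la ) \, \psi(y_\la) \geq M_p ( \Phi_0 , \Phi_1 ; \la ) \, M_q ( \psi(y_0) , \psi(y_1) ; \la ) .
\]

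It then remains only to bound the right-hand side below by $M_\ell ( \Phi_0 \psi(y_0) , \Phi_1 \psi(y_1) ; \la )$, and this is precisely \eqref{Lemma10.1} applied with $a = \Phi_0$, $b = \Phi_1$, $c = \psi(y_0)$, $d = \psi(y_1)$: the hypothesis $p + q \geq 0$ is exactly the hypothesis of \eqref{Lemma10.1}, and the exponent $\ell$ of the present statement is defined by \eqref{Lemma10.1ell}, hence agrees with the one produced there (with the convention $+\infty + (-\infty) = -\infty + \infty = 0$). Chaining the two displays yields the assertion.

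I do not expect a genuine obstacle here: the lemma merely repackages \eqref{Lemma10.1} together with the definition of $q$-concavity into the form in which it will be used, namely pointwise in $y$ inside the integral defining $\Ga$ in \eqref{Gamma} (with $\Phi_0$ and $\Phi_1$ being values of the function $\Phi$ of Lemma \ref{concavityPhi}). The only point worth a line of comment is the degenerate cases --- some of $\Phi_0, \Phi_1, \psi(y_0), \psi(y_1)$ equal to $0$, or $\la \in \{0,1\}$ --- but in each of these one side of \eqref{Lemma10.1} vanishes or both means reduce to a single value, so the inequality is immediate from \eqref{pth-mean}; this is already covered by the cited statement of \eqref{Lemma10.1}.
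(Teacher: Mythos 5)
Your proof is correct and is essentially the paper's own argument: the paper's proof is the one-line instruction to apply \eqref{Lemma10.1} with $(a,b,c,d)=(\Phi_0,\Phi_1,\psi(y_0),\psi(y_1))$, which tacitly includes the preliminary step $\psi(y_\la)\geq M_q(\psi(y_0),\psi(y_1);\la)$ from $q$-concavity that you spell out explicitly. No difference in substance, only in the level of detail.
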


\begin{proof}
\eqref{Lemma10.1} with $(a, b , c ,d ) =( \Phi_0 ,  \Phi_1 ,  \psi (y_0) ,  \psi (y_1))$ completes the proof.
\qed
\end{proof}

\begin{lem}\label{clint}
Let $\Ome$ be a convex set in $\R^n$ with non-empty interior, and $x \in \cl \Ome$.
Then, $x \in \cl \interi \Ome$.
\end{lem}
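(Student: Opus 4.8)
This is a standard fact: a convex set with nonempty interior is contained in the closure of its interior. Let me sketch the plan.

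The plan is to fix a point $x \in \cl \Ome$ and an interior point $z \in \interi \Ome$ (which exists by hypothesis), and to show that every point on the half-open segment $[z, x)$ lies in $\interi \Ome$; letting the segment parameter tend to $1$ then exhibits $x$ as a limit of interior points. Concretely, I would first recall (or quickly reprove) the segment lemma for convex sets: if $z \in \interi \Ome$ and $w \in \cl \Ome$, then $(1-\mu) z + \mu w \in \interi \Ome$ for every $\mu \in [0,1)$. To prove this, pick $\rho > 0$ with $B(z,\rho) \subset \Ome$. For $\mu \in [0,1)$, set $x_\mu = (1-\mu) z + \mu w$; I claim $B(x_\mu, (1-\mu)\rho/2) \subset \Ome$. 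The point is that since $w \in \cl \Ome$, for any target point $u$ near $x_\mu$ one can write $u$ as a convex combination of a suitable point $w' \in \Ome$ close to $w$ and a point of $B(z,\rho)$, using that the coefficient $1-\mu$ multiplying the ball is strictly positive. This is the one genuine computation, and it is routine: one chooses $w' \in \Ome$ with $|w' - w|$ small enough that $u' := (u - \mu w')/(1-\mu)$ lands in $B(z,\rho)$, and then $u = (1-\mu) u' + \mu w' \in \Ome$ by convexity.

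Granting the segment lemma, the conclusion is immediate: since $x \in \cl \Ome$, take the sequence $x_\mu$ with $\mu = 1 - 1/k \to 1$; each $x_\mu \in \interi \Ome$ by the segment lemma (with $w = x$), and $x_\mu \to x$, so $x \in \cl \interi \Ome$.

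I do not expect any real obstacle here — the only mild care needed is in the $\ep$-$\delta$ bookkeeping of the segment lemma, making sure the radius of the ball around $x_\mu$ is uniform over a neighborhood and that the approximating point $w'$ can be chosen in $\Ome$ (not merely $\cl \Ome$) since $w \in \cl \Ome$. Because the statement is so standard, in the write-up I would likely just cite a convex analysis reference (e.g. \cite[Theorem 1.1.15]{Sch} or \cite[Theorem 6.1]{Rock}) for the segment lemma and then give the one-line deduction above.
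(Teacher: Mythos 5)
Your proof is correct and is essentially the same route the paper takes: the paper's entire proof is a citation of the segment lemma \cite[Lemma 1.1.9]{Sch} (if $z \in \interi \Ome$ and $w \in \cl \Ome$, then the half-open segment from $z$ to $w$ lies in $\interi \Ome$), which is exactly the lemma you state, sketch, and then apply by letting $\mu \to 1$. The $\ep$-$\delta$ bookkeeping in your sketch is sound, so there is nothing to fix.
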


This is a consequence of \cite[Lemma 1.1.9]{Sch} (see also \cite[Exercise 3.8]{Mosz}).

\begin{prop}\label{geometric_lemma}
Let $\Ome$ be a bounded convex set in $\R^n$ with non-empty interior, $K = \cl \Ome$, $s \in (0,1]$, $\mu \in [0,+\infty )$, and $v \in S^{n-1}$.
Suppose $(s, \mu ) \neq (1,0)$. 
Then, $\Ome \sm (s K - \mu v)$ has an interior point.
\end{prop}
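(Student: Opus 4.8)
The goal is to show that the set $\Ome \sm (sK - \mu v)$ has nonempty interior under the hypothesis $(s,\mu) \neq (1,0)$, where $K = \cl\Ome$ is a convex body and $s \in (0,1]$, $\mu \ge 0$, $v \in S^{n-1}$. Since $sK - \mu v$ is closed, its complement is open, so it suffices to produce a single point of $\Ome$ that does \emph{not} lie in $sK - \mu v$. The plan is to pick a point on the boundary of $\Ome$ that is ``extremal'' in the direction $v$, show it is not in $sK - \mu v$ by a support-function comparison, and then pass to a nearby interior point of $\Ome$ using Lemma \ref{clint}.

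First I would set $h = h_K$ and recall $h_{sK - \mu v}(u) = s\,h_K(u) - \mu\, (v\cdot u)$ for $u \in S^{n-1}$. Testing with $u = v$ gives $h_{sK-\mu v}(v) = s\,h_K(v) - \mu \le h_K(v)$, with equality precisely when $s = 1$ and $\mu = 0$ (using $h_K(v) > 0$ after translating, if necessary, so that $0 \in \interi\Ome$ — or more carefully, using that $h_K(v) + h_K(-v) = $ width of $K$ in direction $v$ is positive, so at least one of $h_K(v), h_K(-v)$ is positive; testing with $u = v$ or $u = -v$ accordingly yields a strict inequality under $(s,\mu)\neq(1,0)$). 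Thus there is a direction $u_0 \in \{v, -v\}$ with $h_{sK - \mu v}(u_0) < h_K(u_0)$. Pick $x_0 \in K$ with $x_0 \cdot u_0 = h_K(u_0)$; then $x_0 \cdot u_0 > h_{sK-\mu v}(u_0)$, so $x_0 \notin sK - \mu v$, and indeed the whole open halfspace $\{x : x\cdot u_0 > h_{sK-\mu v}(u_0)\}$ is disjoint from $sK - \mu v$.

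Now $x_0 \in K = \cl\Ome$, so by Lemma \ref{clint} we have $x_0 \in \cl\interi\Ome$; hence every neighbourhood of $x_0$ meets $\interi\Ome$. Since $\{x : x\cdot u_0 > h_{sK-\mu v}(u_0)\}$ is an open set containing $x_0$ (as $x_0\cdot u_0 = h_K(u_0) > h_{sK-\mu v}(u_0)$), it contains a point $x_* \in \interi\Ome$. Then $x_* \in \Ome$ and $x_* \notin sK - \mu v$; since the complement of $sK - \mu v$ is open, a whole neighbourhood of $x_*$ lies outside $sK - \mu v$, and intersecting with the open set $\interi\Ome \ni x_*$ gives an open subset of $\Ome \sm (sK - \mu v)$. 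Hence $\Ome \sm (sK - \mu v)$ has an interior point.

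The main obstacle is the borderline bookkeeping in the strict inequality $h_{sK - \mu v}(u_0) < h_K(u_0)$: one must be careful that $h_K(\pm v)$ need not both be positive (depending on the position of the origin), so the clean choice $u_0 = v$ may fail if $h_K(v) = 0$ and $\mu = 0$ while $s < 1$. The fix is either to translate $\Ome$ so that $0 \in \interi\Ome$ (legitimate, since having an interior point is translation-invariant, and $K - c = \cl(\Ome - c)$), which forces $h_K(v) > 0$ for all $v$, or to argue directly via the two cases $u_0 \in \{v,-v\}$ using positivity of the width. Everything else is a routine application of elementary convexity and Lemma \ref{clint}.
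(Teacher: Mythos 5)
Your proof is correct and follows essentially the same route as the paper's: compare support functions in the two directions $\pm v$, use positivity of the width of $K$ (together with $\mu>0$ when $s=1$) to get a strict inequality $h_{sK-\mu v}(u_0)<h_K(u_0)$ for some $u_0\in\{v,-v\}$, then pass from a boundary maximizer to a nearby interior point via Lemma \ref{clint} and openness. One caveat: your proposed alternative ``fix'' of translating so that $0\in\interi\Ome$ is not legitimate as justified, because $sK-\mu v$ does not transform into $s(K-c)-\mu v$ under the translation $x\mapsto x-c$ when $s<1$ (one would have to replace $\mu v$ by $(1-s)c+\mu v$, changing the instance of the proposition); fortunately the width-based two-case argument you also give, which is the paper's, makes this unnecessary.
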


\begin{proof}
Let us first show the statement under the assumption $h_K(v) >h_{sK-\mu v} (v)$.
Since we have $s K - \mu v \subset H^- (h_{sK-\mu v} (v) , v)$, it is sufficient to show that $\Ome \sm H^- (h_{sK-\mu v} (v) ,  v )$ has an interior point.

Let $x \in K$ be such that $x\cdot v =h_K (v)$.
By Lemma \ref{clint}, we have
\[
B \( x , \frac{h_K (v) - h_{sK-\mu v} (v)}{2} \) \cap \interi \Ome \neq \emptyset .
\]
We take a point $y$ from the above intersection.
We remark
\[
y \cdot v
= x \cdot v + (y-x) \cdot v 
\geq h_K (v) - \vert y -x \vert 
> h_K (v) -  \frac{h_K (v) - h_{sK-\mu v} (v)}{2} 
> h_{sK-\mu v} (v)  ,
\]
that is, $y \notin H^- ( h_{sK-\mu v} (v)  , v )$. 
Let us show that $y$ is an interior point of $\Ome \sm H^- (  h_{sK-\mu v} (v)  , v)$.

Since $y \in \interi \Ome$, there exists a positive $\de$ such that $B(y,\de) \subset \Ome$. 
Let 
\[
\ep = \min \left\{ \frac{h_K (v) - h_{sK-\mu v} (v)}{2}  , \de \right\} >0 .
\]
Fix an arbitrary $z \in B(y,\ep)$. 
By the definition of $\ep$, we have $z \in \Ome$.
Since we have
\[
\vert z -x \vert 
\leq \vert z-y \vert + \vert y-x \vert 
< \ep + \frac{h_K (v) - h_{sK-\mu v} (v)}{2} 
\leq h_K (v) - h_{sK-\mu v} (v) ,
\]
we have 
\[
z \cdot v
= x\cdot v + (z-x) \cdot v
\geq h_K (v) - \vert z-x \vert 
> h_{sK-\mu v} (v)  .
\]
Thus, $z \notin H^- ( h_{sK-\mu v} (v)  , v)$. 

Next, we show the statement under the assumption $h_K(v) \leq h_{sK-\mu v} (v)$.
Since $h_{sK-\mu v} (v) = s h_K(v) -\mu$, the assumption implies $\mu \leq (s-1) h_K (v)$ and $s<1$.
Since we have $s K - \mu v \subset H^- ( h_{sK -\mu v} (-v) , -v)$, it is sufficient to show that $\Ome \sm H^- (h_{sK -\mu v} (-v) ,  -v )$ has an interior point.

Since $\Ome$ has an interior point, we have the positivity of the width of $K$, that is, $h_K (v) + h_K (-v) >0$.
Thus, we obtain
\[
h_K (-v) - h_{sK -\mu v} (-v) 
=h_K (-v) - \( sh_K (-v) +\mu \) 
\geq (1-s)  \( h_K (v) + h_K (-v) \) 
> 0 .
\]

Let $x \in K$ be such that $h_K (-v) = x \cdot (-v)$. 
By Lemma \ref{clint}, we have 
\[
B \( x , \frac{h_K (-v) - h_{sK -\mu v} (-v)}{2} \) \cap \interi \Ome \neq \emptyset .
\]
We take a point $y$ from the above intersection.
In the same manner as above, it is shown that $y$ is an interior point of $\Ome \sm (sK - \mu v)$. 
\qed
\end{proof}

\subsection{Main theorem}

\begin{theorem}\label{thm_alpconc}
Let $I$ be an interval in $(0,+\infty)$, $\f$ a non-negative measurable function defined on $\R^n \times I$, $\psi$ a non-negative measurable function defined on $\R^n$, $\al \in \R$, $p \in \R$, and $q \in \R \cup \{ + \infty \}$.
Let $\ell$ be as in \eqref{Lemma10.1ell}.
Assume that the following conditions are satisfied:
\begin{enumerate}[{\rm (i)}]
\item $\f$ is almost-strictly $\al$-parabolically $p$-concave on $\R^n \times I$.

\item $\psi$ is $q$-concave on $\R^n$.

\item $\R^n \sm \psi^{-1} (0)$ is bounded, and its interior is not empty.

\item $p+q \geq 0$ and $\ell \geq -1/n$. 
\end{enumerate}
Then, the function 
\[
\Ga (x,t) 
= \f ( \cdot ,t ) \ast \psi (x) 
= \int_{\R^n} \f (x-y , t) \psi (y) \, \d y ,\  (x,t) \in \R^n \times I ,
\]
is strictly $\al$-parabolically $\ell /(1+n \ell )$-concave on $\R^n \times I$. 
\end{theorem}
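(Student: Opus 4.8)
The plan is to imitate the standard derivation of (non-strict) power concavity of a convolution from the Borell--Brascamp--Lieb inequality, and then to upgrade it to strictness by an analysis of the equality case that exploits the boundedness of $\supp\psi$ through Proposition~\ref{geometric_lemma}. First I would reduce to $\al\ne0$: when $\al=0$ hypothesis (i) only makes sense for $I\subset(1,+\infty)$, so one passes to $\tilde\f(x,t)=\f(x,e^t)$ on $\R^n\times\log I$; by Proposition~\ref{0pconc=1pconc} $\tilde\f$ is almost-strictly $1$-parabolically $p$-concave, the $\al=1$ instance of the theorem applies to $\tilde\f$ and $\psi$, and since $\tilde\f(\cdot,t)\ast\psi(x)=\Ga(x,e^t)$, Proposition~\ref{0pconc=1pconc} again gives the $\al=0$ conclusion (using Proposition~\ref{B96Cor2.1} to see $\R^n\times I$ is $0$-parabolically convex). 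So assume $\al\ne0$. As a preliminary, restricting $\f$ to a fixed time and using Remark~\ref{alpconc=>pconc} together with Lemma~\ref{continuity} shows $\f(\cdot,t)$ is positive and continuous on $\R^n$ for each $t\in I$; by Remark~\ref{supp_convex} and Lemma~\ref{clint}, $\Ome:=\R^n\sm\psi^{-1}(0)$ is a bounded convex set with nonempty interior, $K:=\cl\Ome$ is a convex body, $\cl\interi\Ome=K$, and $\Ga>0$ on $\R^n\times I$ (we may assume $\Ga$ is finite-valued, which is automatic unless $q<0$).

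For the inequality itself, fix $(x_0,t_0),(x_1,t_1)\in\R^n\times I$, $\la\in(0,1)$, put $t_\la'=M_\al(t_0,t_1;\la)$ and $f_i(y)=\f(x_i-y,t_i)\psi(y)$. For any $y_0,y_1$ with $y_\la=y$, the $\al$-parabolic $p$-concavity of $\f$ (equivalently Lemma~\ref{concavityPhi}), the $q$-concavity of $\psi$, and Lemma~\ref{inequalityS} give
\[
\f(x_\la-y,t_\la')\,\psi(y)\ \ge\ M_p\bigl(\f(x_0-y_0,t_0),\f(x_1-y_1,t_1);\la\bigr)\,M_q\bigl(\psi(y_0),\psi(y_1);\la\bigr)\ \ge\ M_\ell(f_0(y_0),f_1(y_1);\la).
\]
Taking the essential supremum over such $(y_0,y_1)$ and integrating, Theorem~\ref{BL} yields
\[
\Ga(x_\la,t_\la')=\int_{\R^n}\f(x_\la-y,t_\la')\psi(y)\,\d y\ \ge\ \int_{\R^n}S(y)\,\d y\ \ge\ M_{\ell/(1+n\ell)}\bigl(\Ga(x_0,t_0),\Ga(x_1,t_1);\la\bigr),
\]
which is already the $\al$-parabolic $\ell/(1+n\ell)$-concavity of $\Ga$.

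For strictness, suppose equality holds above for distinct $(x_0,t_0),(x_1,t_1)$ and $\la\in(0,1)$. Then all inequalities are equalities; in particular $\f(x_\la-y,t_\la')\psi(y)=S(y)$ for a.e.\ $y$. Fix such a $y\in\Ome$ and choose $(y_0^{(k)},y_1^{(k)})$ with $y_\la^{(k)}=y$ and $M_\ell(f_0(y_0^{(k)}),f_1(y_1^{(k)});\la)\to S(y)>0$; since $M_\ell$ vanishes when a factor does, $y_0^{(k)},y_1^{(k)}\in\Ome$, hence are bounded, so along a subsequence $y_i^{(k)}\to y_i^\ast\in K$ with $y_\la^\ast=y$. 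Squeezing the chain $M_\ell\le M_pM_q\le\f(x_\la-y,t_\la')\psi(y)$ in the limit (here continuity of $\f(\cdot,t_i)$ enters) forces $M_p(\f(x_0-y_0^\ast,t_0),\f(x_1-y_1^\ast,t_1);\la)=\f(x_\la-y,t_\la')=\f(x_\la-y_\la^\ast,t_\la')$, i.e.\ equality in the $\al$-parabolic $p$-concavity of $\f$; by almost-strictness (Definition~\ref{def_alpconc}, cf.\ Remark~\ref{equalityPhi}) this yields $(x_0-y_0^\ast)/t_0^\al=(x_1-y_1^\ast)/t_1^\al$. Solving this with $y_\la^\ast=y$ and $(1-\la)t_0^\al+\la t_1^\al=(t_\la')^\al$ gives $y_i^\ast=x_i-s_i(x_\la-y)$ with $s_i=(t_i/t_\la')^\al$ and $(1-\la)s_0+\la s_1=1$. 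Since $y_i^\ast\in K$ for a.e.\ $y\in\Ome$, passing to closures yields $s_iK+(x_i-s_ix_\la)\subset K$, i.e.\ $\Ome\subset(1/s_i)K+(x_\la-x_i/s_i)$. Choosing the index with $s_i\ge1$ (one exists since $(1-\la)s_0+\la s_1=1$), Proposition~\ref{geometric_lemma} with $s=1/s_i\in(0,1]$ forces $s_i=1$ and $x_i=x_\la$; then $s_i=1$ gives $t_0=t_\la'=t_1$ (as $M_\al$ is a strict mean for $\la\in(0,1)$) and $x_i=x_\la$ gives $x_0=x_1$, contradicting distinctness. Hence $\Ga$ is strictly $\al$-parabolically $\ell/(1+n\ell)$-concave.

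I expect this last step to be the main obstacle: converting equality in the BBL-inequality into usable pointwise information and then running the compactness argument that identifies the ``optimal'' pair $(y_0^\ast,y_1^\ast)$ for a.e.\ $y\in\Ome$. That is where the continuity of $\f(\cdot,t)$, the precise content of almost-strictness (equality of $\f$ forces colinearity in the variable $x/t^\al$, not mere coincidence of points), and above all the boundedness of $\supp\psi$ through Proposition~\ref{geometric_lemma} all have to be combined; none of these is needed for the non-strict statement.
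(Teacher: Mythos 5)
Your proof is correct and uses essentially the same ingredients in essentially the same way as the paper's: the reduction of $\al =0$ to $\al =1$ via Proposition \ref{0pconc=1pconc}, the BBL inequality together with Lemma \ref{inequalityS} for the non-strict estimate, continuity and compactness to locate an optimal pair $\( y_0^\ast , y_1^\ast \)$, almost-strictness to force the colinearity $(x_0 - y_0^\ast)/t_0^\al = (x_1 - y_1^\ast )/t_1^\al$, and Proposition \ref{geometric_lemma} applied to a homothet of $K = \cl \Ome$ of ratio at most $1$. The only difference is organizational: the paper argues directly, exhibiting a subset $\Ome ' = \Ome \sm K'$ with non-empty interior on which $S(y) < \f ( x_\la -y , M_\al (t_0 ,t_1 ;\la )) \psi (y)$ and integrating, whereas you run the contrapositive through the almost-everywhere equality case of the BBL step; the two are interchangeable.
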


\begin{lem}\label{lem_0pconc}
If Theorem \ref{thm_alpconc} is true for $\al =1$, then it is true for $\al =0$.
\end{lem}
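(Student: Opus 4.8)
The plan is to reduce the $\al=0$ case to the $\al=1$ case via the exponential change of the time variable, exactly paralleling the way Corollary \ref{conv=>0paraconv}, Corollary \ref{B96Thm2.10}, and Proposition \ref{0pconc=1pconc} were all obtained from their $\al=1$ counterparts. So suppose Theorem \ref{thm_alpconc} holds for $\al=1$ and suppose we are given data $I$, $\f$, $\psi$, $p$, $q$, $\ell$ satisfying (i)--(iv) with $\al=0$; by definition of almost-strict $0$-parabolic $p$-concavity we must have $I\subset(1,+\infty)$. First I would set $J=\log I$, which is an interval in $(0,+\infty)$, and define $\tilde\f(x,t)=\f(x,e^t)$ on $\R^n\times J$. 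By Proposition \ref{0pconc=1pconc}, $\tilde\f$ is almost-strictly $1$-parabolically $p$-concave on $\R^n\times J$, so hypothesis (i) holds for $\tilde\f$ with $\al=1$; hypotheses (ii), (iii), (iv) are unchanged since they involve only $\psi$, $p$, $q$, $\ell$, $n$.

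Next I would apply the $\al=1$ case of the theorem to $\tilde\f$ and $\psi$, obtaining that
\[
\tilde\Ga(x,t)=\int_{\R^n}\tilde\f(x-y,t)\psi(y)\,\d y=\int_{\R^n}\f(x-y,e^t)\psi(y)\,\d y
\]
is strictly $1$-parabolically $\ell/(1+n\ell)$-concave on $\R^n\times J$. Now observe that $\tilde\Ga(x,t)=\Ga(x,e^t)$ for $(x,t)\in\R^n\times J$, i.e.\ $\tilde\Ga=\widetilde\Ga$ in the notation of Proposition \ref{0pconc=1pconc} applied to $\Ga$ on $\R^n\times I$ (note $\R^n\times I$ is $0$-parabolically convex since it is $A\times I$ with $A=\R^n$ convex, by Proposition \ref{B96Cor2.1}, so Proposition \ref{0pconc=1pconc} is applicable with $E=\R^n\times I$). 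Applying the converse direction of Proposition \ref{0pconc=1pconc} — strict $1$-parabolic $p'$-concavity of $\widetilde\Ga$ implies strict $0$-parabolic $p'$-concavity of $\Ga$, with $p'=\ell/(1+n\ell)$ — we conclude that $\Ga$ is strictly $0$-parabolically $\ell/(1+n\ell)$-concave on $\R^n\times I$, which is exactly the desired conclusion for $\al=0$.

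There is essentially no obstacle here: the only things to check carefully are the bookkeeping identities $\tilde\Ga(x,t)=\Ga(x,e^t)$ and the fact that the constant $\ell/(1+n\ell)$ produced by the $\al=1$ theorem is the same one asserted for $\al=0$ (it depends only on $\ell$ and $n$, not on $\al$), together with confirming that Proposition \ref{0pconc=1pconc} indeed records the strict version of the equivalence and not merely the non-strict and almost-strict ones (it does, via the relations \eqref{log0=1log} and \eqref{exp1=0exp}, which preserve strictness of the $p$-th mean inequality). The mild subtlety worth a sentence in the write-up is that the hypothesis of almost-strict $0$-parabolic $p$-concavity of $\f$ already forces $I\subset(1,+\infty)$, so the logarithmic substitution is legitimate and the target conclusion is the appropriate strict $0$-parabolic statement; once that is noted, the proof is a two-line invocation of Propositions \ref{0pconc=1pconc} and the $\al=1$ case.
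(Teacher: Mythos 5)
Your proposal is correct and follows essentially the same route as the paper: the authors likewise set $\tilde{\f}(x,t)=\f(x,e^t)$ on $\R^n\times\log I$, invoke Proposition \ref{0pconc=1pconc} to transfer the almost-strict parabolic concavity, apply the $\al=1$ case of Theorem \ref{thm_alpconc}, and return via $\Ga(x,t)=\tilde{\Ga}(x,\log t)$ and Proposition \ref{0pconc=1pconc} again. Your remarks on the bookkeeping (that $I\subset(1,+\infty)$ is forced and that the exponent $\ell/(1+n\ell)$ is independent of $\al$) match the paper's implicit treatment.
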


\begin{proof}
We assume $I \subset (1,+\infty)$ when we discuss almost-strict $0$-parabolic power concavity of a function (see Definition \ref{def_alpconc}).
By Remark \ref{tildeAI}, $\tilde{\R^n \times I} = \R^n \times \log I \subset \R^n \times (0,+\infty)$.
Let $\tilde{\f}$ be as in Proposition \ref{0pconc=1pconc}.
By the condition (i) with $\al =0$ and Proposition \ref{0pconc=1pconc}, $\tilde{\f}$ is almost-strictly $1$-parabolically $p$-concave on $\R^n \times \log I$.
Thus, by Theorem \ref{thm_alpconc} with $\al =1$, the function
\[
\tilde{\Ga} (x, t) = \tilde{\f} ( \cdot , t) \ast \psi (x) 
= \int_{\R^n} \tilde{\f}   (x-y , t )  \psi (y) \, \d y  ,\
(x,t) \in \R^n \times \log I ,
\]
is strictly $1$-parabolically $\ell /(1+n \ell )$-concave on $\R^n \times \log I$.
Since $\Ga (x,t) = \tilde{\Ga} ( x, \log t )$ for any $(x,t) \in \R^n \times I$, Proposition \ref{0pconc=1pconc} completes the proof.
\qed
\end{proof}

\begin{proof}{(\rm of Theorem \ref{thm_alpconc})}
Due to Lemma \ref{lem_0pconc}, we give a proof in the case where $\al \neq 0$.

Let $(x_0 ,t_0) , (x_1 , t_1) \in \R^n \times I$, and $\la \in (0,1)$. 
Suppose $(x_0 ,t_0 ) \neq (x_1 ,t_1)$. 
Put
\begin{align*}
\Phi   (x, y ,t ) = \f (x-y ,t)  , \ 
\Phi_\psi (x,y,t) = \Phi   (x,y,t) \psi (y) ,\
(x , y, t ) \in \R^n \times \R^n \times I   ,   \\
S (y) = \left. \esssup \left\{ M_\ell \( \Phi_\psi \(  x_0 ,y_0 , t_0 \)  , \Phi_\psi \( x_1 ,y_1 , t_1  \) ; \la  \)   \,  \rvert   \,    \( y_0, y_1 \) \in \R^n \times \R^n ,\ y_\la =y \right\}  ,\  
y \in \R^n .
\end{align*}
Let $\Ome =\R^n \sm \psi^{-1} (0)$. 
By Remark \ref{equalityPhi} (2) and Lemma \ref{continuity} (2), for each $(x,t) \in \R^n \times I$, $\Phi_{\psi} (x, \cdot ,t )$ is positive on $\interi \Ome$.
By Theorem \ref{BL} with $f_0 = \Phi_\psi (x_0 , \cdot , t_0)$ and $f_1 = \Phi_\psi (x_1 , \cdot , t_1)$,
\[
\int_{\R^n} S(y) \, \d y 
\geq M_{\ell  /(1+n  \ell  )} \( \Ga \( x_0, t_0 \) ,  \Ga \( x_1, t_1 \) ; \la \) .
\]
Thus, it is sufficient to show
\[
\int_{\R^n} S(y) \, \d y 
< \Ga \(   x_\la , M_\al \( t_0 ,t_1 ;\la \) \) 
=\int_{\R^n} \Phi_\psi \( x_\la , y , M_\al \( t_0 , t_1 ; \la \) \) \, \d y .
\]

Since $\Phi_\psi ( x_\la , y, M_\al ( t_0, t_1, \la ) ) =0$ for any $y \in \R^n \sm \Ome$, we have 
\[
\Ga \( x_\la ,  M_\al \(  t_0, t_1, \la  \)  \) 
= \int_\Ome \Phi_\psi   \( x_\la , y, M_\al  \( t_0, t_1, \la  \)  \)  \, \d y  .
\]
Since $\Ome$ is convex (see Remark \ref{supp_convex}), if $y \in \R^n \sm \Ome$, then, for any $(y_0, y_1) \in \R^n \times \R^n$ with $y_\la =y$, we have $(y_0 ,y_1 ) \notin \Ome \times \Ome$.
From this property, we have $S(y)=0$ for any $y \in \R^n \sm \Ome$, which implies
\[
\int_{\R^n} S(y) \, \d y 
= \int_\Ome S(y) \, \d y .
\]
Thus, our aim is to show
\[
\int_\Ome S(y) \, \d y 
< \int_\Ome \Phi_\psi   \( x_\la , y, M_\al  \( t_0, t_1, \la  \)  \)  \, \d y .
\]

We construct a subset $\Ome'$ of $\Ome$ such that $\Ome'$ has non-empty interior, and that $S(y) <  \Phi_\psi ( x_\la , y , M_\al ( t_0 ,t_1 ; \la ) )$ for any $y \in \Ome'$. 
Let $K= \cl \Ome$. 
Since $M_\ell ( \Phi_\psi ( x_0 ,y_0 , t_0 ) ,  \Phi_\psi ( x_1 ,  y_1 , t_1 ) ;\la )   =0$ for any $(y_0, y_1 ) \notin K \times K$, we have 
\[
S(y) = \left. \esssup \left\{ M_\ell \( \Phi_\psi \(  x_0 ,y_0 , t_0 \)  , \Phi_\psi \( x_1 ,y_1 , t_1  \) ; \la  \) \rvert \( y_0, y_1 \)  \in K \times K ,\ y_\la =y \right\}  .
\]
By Lemma \ref{inequalityS} with $\Phi_0 = \Phi ( x_0, y_0 , t_0 )$ and $\Phi_1 = \Phi (x_1 , y_1 , t_1 )$, we have
\[
S(y) \leq \left. \esssup \left\{ M_p \( \Phi \(  x_0 ,y_0 , t_0 \)  , \Phi \( x_1 ,y_1 , t_1  \) ; \la  \)  \,  \rvert   \,   \( y_0, y_1 \)  \in K \times K ,\ y_\la =y \right\} \psi (y) .
\]
By the continuity of $\Phi ( x_j , \cdot ,t_j  )$ (see Lemma \ref{continuity} and Remark \ref{equalityPhi} (2)) and the compactness of $K$, there exists a pair $(\eta_0 ,\eta_1 ) \in \R^n \times \R^n$ such that $(\eta_0 , \eta_1 ) \in K \times K$, $\eta_\la = y$, and 
\begin{align*}
&\left. \esssup \left\{ M_p \(   \Phi   \(  x_0 ,y_0 , t_0 \)  ,   \Phi   \( x_1 ,y_1 , t_1  \) ; \la  \) \,  \rvert   \,   \( y_0, y_1 \)  \in K \times K ,\ y_\la =y \right\}     \\
&= M_p \( \Phi   \( x_0 ,  \eta_0 , t_0 \) ,   \Phi   \( x_1, \eta_1 , t_1 \) ; \la \) .
\end{align*}

Let 
\[
K' =  t_1^\al \( \( \frac{\la}{t_0^\al} + \frac{1-\la}{t_1^\al} \) K - \la \( \frac{x_0}{t_0^\al} - \frac{x_1}{t_1^\al} \) \)   \cap  \,  t_0^\al \( \( \frac{\la}{t_0^\al} + \frac{1-\la}{t_1^\al} \) K + (1- \la ) \( \frac{x_0}{t_0^\al} - \frac{x_1}{t_1^\al} \) \)   ,
\]
and $\Ome'  = \Ome \sm K'$.
Proposition \ref{geometric_lemma} guarantees that $\Ome'$ has non-empty interior.

It is directly shown that $y \in K'$ if and only if there exists a pair $( y_0 , y_1 ) \in \R^n \times \R^n$ such that 
\begin{numcases}
{}
\( y _0 ,y_1 \) \in K \times K ; \label{KK}   \\
y_\la =y  ;  \label{y}  \\
\frac{x_0-y_0 }{t_0^\al} = \frac{x_1-y_1}{t_1^\al}   .  \label{equality}  
\end{numcases}
If $y \in \Ome'$ is expressed by \eqref{KK} and \eqref{y} for $(y_0 , y_1 ) = ( \eta_1 , \eta_2)$, then \eqref{equality} does not hold for $(y_0 , y_1 ) = ( \eta_1 , \eta_2)$.
Thus, by Remark \ref{equalityPhi} (1), we have
\[
M_p \(   \Phi  \( x_0 ,  \eta_0 , t_0 \) , \Phi \( x_1, \eta_1 , t_1 \) ; \la \)    
< \Phi   \( x_\la ,  \eta_\la , M_\al \( t_0 , t_1 ; \la \) \)   
=  \Phi   \( x_\la ,  y , M_\al \( t_0 , t_1 ; \la \) \)   
\]
for any $y \in  \Ome'$.
Hence we obtain
\[
S(y) < \Phi   \( x_\la ,  y , M_\al \( t_0 , t_1 ; \la \) \) \psi (y)  
= \Phi_\psi \( x_\la , y , M_\al \( t_0 ,t_1; \la \) \) 
\]
for any $y \in \Ome'$, and the proof is completed.
\qed
\end{proof}

\begin{cor}
Let $I$, $\f$, $\psi$, $\al$, $p$, $q$ and $\Ga$ be as in Theorem \ref{thm_alpconc}.
If all the conditions {\rm (i)}--{\rm (iv)} in Theorem \ref{thm_alpconc} are satisfied, then $\Ga$ has at most one maximum point in $\R^n \times I$.
\end{cor}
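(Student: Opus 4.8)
The plan is to obtain this as an immediate consequence of Theorem \ref{thm_alpconc} together with the elementary principle that a strictly parabolically quasi-concave function cannot attain its maximum at two distinct points. Write $r := \ell/(1+n\ell)$ for brevity. First I would record that, by Theorem \ref{thm_alpconc}, $\Ga$ is strictly $\al$-parabolically $r$-concave on $\R^n \times I$, and then upgrade this to: $\Ga$ is strictly $\al$-parabolically quasi-concave (strictly $(-\infty)$-concave) on $\R^n \times I$. When $r = -\infty$ this is already what Theorem \ref{thm_alpconc} gives; when $r \in \R$ it is the parabolic analogue of the fact recalled in the Introduction that strictly $p$-concave functions are strictly quasi-concave, and it is proved the same way: by the monotonicity \eqref{M_monotonicity} of the means one has
\[
\Ga\bigl(x_\la, M_\al(t_0,t_1;\la)\bigr) \;\ge\; M_r\bigl(\Ga(x_0,t_0),\Ga(x_1,t_1);\la\bigr) \;\ge\; M_{-\infty}\bigl(\Ga(x_0,t_0),\Ga(x_1,t_1);\la\bigr),
\]
so if the leftmost and rightmost terms coincide then all three coincide; in particular $\Ga(x_\la, M_\al(t_0,t_1;\la)) = M_r(\Ga(x_0,t_0),\Ga(x_1,t_1);\la)$, which by strict $r$-concavity forces one of the degenerate conditions $(x_0,t_0)=(x_1,t_1)$, $\la=0$, $\la=1$.

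Next I would argue by contradiction. Suppose $(x_0,t_0)$ and $(x_1,t_1)$ are two distinct points of $\R^n \times I$ at which $\Ga$ attains its maximum value $M$. Since $\R^n$ is convex, Proposition \ref{B96Cor2.1} shows that $\R^n \times I$ is $\al$-parabolically convex, so $(x_{1/2}, M_\al(t_0,t_1;1/2)) \in \R^n \times I$. Because $(x_0,t_0) \neq (x_1,t_1)$ and $1/2 \in (0,1)$, the strict $\al$-parabolic quasi-concavity of $\Ga$ gives
\[
\Ga\bigl(x_{1/2}, M_\al(t_0,t_1;1/2)\bigr) \;>\; M_{-\infty}\bigl(\Ga(x_0,t_0),\Ga(x_1,t_1);1/2\bigr) \;=\; M_{-\infty}(M,M;1/2) \;=\; M,
\]
the last equality being the identity $M_{-\infty}(a,a;\la)=a$, valid for every $a \in [0,+\infty)$ and $\la \in [0,1]$ (including $a=0$). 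This contradicts the maximality of $M$, and hence $\Ga$ has at most one maximum point in $\R^n \times I$.

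I do not anticipate a genuine obstacle: the whole weight of the statement is carried by Theorem \ref{thm_alpconc}, and the remaining steps are routine. The only points requiring (minor) care are the passage from strict parabolic power concavity to strict parabolic quasi-concavity — which is verbatim the non-parabolic argument, with the caveat that $r = \ell/(1+n\ell)$ may equal $-\infty$, in which case there is nothing to prove — and the verification that the midpoint $(x_{1/2}, M_\al(t_0,t_1;1/2))$ stays in the domain, which is exactly Proposition \ref{B96Cor2.1} applied to the convex set $\R^n$. If one prefers, one may also observe via Remark \ref{alpconc=>pconc} and Lemma \ref{continuity}(2) that $\Ga$ is everywhere positive on $\R^n \times I$, since $r \in \R \cup \{-\infty\}$; but positivity is not needed above, precisely because $M_{-\infty}(a,a;\la)=a$ even when $a=0$.
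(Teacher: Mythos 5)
Your proposal is correct and follows exactly the route the paper intends: the paper states this corollary without proof, relying on Theorem \ref{thm_alpconc} together with the remark in the Introduction that strict (parabolic) power concavity implies strict (parabolic) quasi-concavity via \eqref{M_monotonicity}, and that a strictly quasi-concave function on a (parabolically) convex set has at most one global maximum point. Your write-up simply makes these omitted steps explicit, and all of them check out, including the domain verification via Proposition \ref{B96Cor2.1} and the harmless $M=0$ case.
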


Theorem \ref{thm_alpconc} improves \cite[Theorem 3.4]{SakJG}.

\begin{theorem}\label{thm_pconc}
Let $f$ and $g$ be non-negative measurable functions defined on $\R^n$.
Let $p \in \R$, and $q \in \R \cup \{ + \infty \}$.
Let $\ell$ be as in \eqref{Lemma10.1ell}.
Assume that the following conditions are satisfied:
\begin{enumerate}[{\rm(i)}]
\item $f$ is strictly $p$-concave on $\R^n$.

\item $g$ is $q$-concave on $\R^n$.

\item $\R^n \sm g^{-1} (0)$ is bounded, and its interior is not empty.

\item $p+q \geq 0$ and $\ell \geq -1/n$. 
\end{enumerate}
Then, the function 
\[
G (x) 
= f  \ast g (x) 
= \int_{\R^n} f (x-y ) g (y) \, \d y ,\  x \in \R^n ,
\]
is strictly $\ell  /  (  1+  n   \ell )$-concave on $\R^n$. 
\end{theorem}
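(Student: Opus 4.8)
The plan is to deduce Theorem \ref{thm_pconc} from the main theorem, Theorem \ref{thm_alpconc}, by promoting the static functions $f$ and $g$ to a parabolic setting via the hat-construction of Proposition \ref{pconc=>alpconc}. First I would fix any $\al \in \R \sm \{ 0 \}$, say $\al = 1$, and a convenient interval $I$, e.g. $I = (0,+\infty)$. Since $f$ is strictly $p$-concave on $\R^n$, Proposition \ref{pconc=>alpconc} gives that the function
\[
\hat{f}_{p,\al} (x,t) = t^{\al/p} f\!\left( \frac{x}{t^\al} \right),\quad (x,t) \in \hat{\R^n}_\al = \R^n \times (0,+\infty),
\]
is almost-strictly $\al$-parabolically $p$-concave on $\R^n \times I$ (note $\hat{\R^n}_\al = \R^n \times (0,+\infty)$ because $\R^n$ is a convex cone, by Proposition \ref{cone}). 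Then I would set $\f = \hat{f}_{p,\al}$ and $\psi = g$ and check that the four hypotheses of Theorem \ref{thm_alpconc} hold: (i) is exactly the almost-strictness just obtained; (ii) is hypothesis (ii) of Theorem \ref{thm_pconc}; (iii) is hypothesis (iii) since $\psi = g$; and (iv) is hypothesis (iv) verbatim. The measurability of $\f$ follows from that of $f$.

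Applying Theorem \ref{thm_alpconc}, the convolution in the space variable
\[
\Ga (x,t) = \hat{f}_{p,\al}(\cdot,t) \ast g(x) = \int_{\R^n} t^{\al/p} f\!\left( \frac{x-y}{t^\al} \right) g(y)\, \d y
\]
is strictly $\al$-parabolically $\ell/(1+n\ell)$-concave on $\R^n \times I$. Next I would extract the static conclusion by restricting to a fixed time slice. By Remark \ref{alpconc=>pconc} (applied with $p$ there equal to $\ell/(1+n\ell)$ here), for any fixed $\tau \in (0,+\infty)$ the function $x \mapsto \Ga(x,\tau)$ is strictly $\ell/(1+n\ell)$-concave on $\R^n$. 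Taking $\tau = 1$ gives that $x \mapsto \Ga(x,1) = \int_{\R^n} f(x-y) g(y)\, \d y = f \ast g(x) = G(x)$ is strictly $\ell/(1+n\ell)$-concave on $\R^n$, which is the desired conclusion.

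I do not expect a serious obstacle here: the theorem is essentially a corollary of the main theorem, and the only points requiring care are bookkeeping ones — confirming that $\hat{\R^n}_\al = \R^n \times (0,+\infty)$ so that the parabolic convolution is defined for all $(x,t)$, that the almost-strict (rather than strict) parabolic concavity of $\hat{f}_{p,\al}$ is precisely what Theorem \ref{thm_alpconc}(i) demands, and that evaluating at $t = 1$ collapses the factor $t^{\al/p}$ and the rescaling $x/t^\al$ to the identity so that $\Ga(\cdot,1)$ is literally $f \ast g$. If one wished to avoid even invoking Remark \ref{alpconc=>pconc}, one could instead note directly that $\Ga(x,1) = G(x)$ and read off strict concavity from the defining inequality of strict $\al$-parabolic concavity with $t_0 = t_1 = 1$, using $M_\al(1,1;\la) = 1$; this is the step I would write out if a referee wanted full detail.
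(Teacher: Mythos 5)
Your proposal is correct and follows essentially the same route as the paper: lift $f$ to $\f(x,t)=t^{1/p}f(x/t)$ via Proposition \ref{pconc=>alpconc} (using Proposition \ref{cone} to see the domain is all of $\R^n\times(0,+\infty)$), apply Theorem \ref{thm_alpconc} with $\al=1$ and $\psi=g$, and restrict to the slice $t=1$ via Remark \ref{alpconc=>pconc}. The only cosmetic point (shared with the paper's own write-up) is that for $p=0$ one should use the logarithmic form of $\hat{f}_{p,\al}$ from Proposition \ref{pconc=>alpconc} rather than $t^{\al/p}$.
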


\begin{proof}
Let 
\[
\f (x,t) = t^{1/p} f \( \frac{x}{t} \) ,\ 
(x,t) \in \R^n \times (0,+\infty ) .
\]
Proposition \ref{pconc=>alpconc} guarantees that $\f$ is almost-strictly $1$-parabolically $p$-concave on $\R^n \times (0,+\infty)$ (see also Proposition \ref{cone}).
By Theorem \ref{thm_alpconc}, the function
\[
\Ga (x,t) = \f  (\cdot ,t ) \ast g (x) 
= \int_{\R^n} \f (x-y ,t ) g (y) \, \d y , \
(x,t) \in \R^n \times (0,+\infty ) ,
\]
is strictly 1-parabolically $\ell /(1+n \ell)$-concave on $\R^n \times (0,+\infty)$.
Since $G = \Ga ( \cdot , 1)$, Remark \ref{alpconc=>pconc} completes the proof.
\qed
\end{proof}

\begin{cor}
Let $f$, $g$, $p$, $q$ and $G$ be as in Theorem \ref{thm_pconc}.
If all the conditions {\rm (i)}--{\rm (iv)} in Theorem \ref{thm_pconc} are satisfied, then $G$ has at most one maximum point in $\R^n$.
\end{cor}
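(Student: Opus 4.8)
The plan is to obtain this as an immediate consequence of Theorem \ref{thm_pconc} together with the elementary remarks on quasi-concavity recorded in the Introduction. First I would note that, under hypotheses (i)--(iv), Theorem \ref{thm_pconc} gives that $G$ is strictly $\ell/(1+n\ell)$-concave on $\R^n$. Since $p \in \R$, the exponent $\ell$ determined by \eqref{Lemma10.1ell} is finite, hence so is $1+n\ell$; and since $\ell \geq -1/n$ we have $1+n\ell \geq 0$, so with the convention that $\ell/(1+n\ell)$ denotes $-\infty$ when $\ell = -1/n$, the exponent $r := \ell/(1+n\ell)$ always lies in $\R \cup \{-\infty\}$.

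Next I would use the monotonicity \eqref{M_monotonicity} of the $p$-th means — equivalently, the Introduction's observation that for any $r \in \R \cup \{\pm\infty\}$ a strictly $r$-concave function is strictly quasi-concave — to conclude that $G$ is strictly quasi-concave on $\R^n$. It then remains to invoke the fact, also stated in the Introduction, that a strictly quasi-concave function has at most one global maximum point. Concretely: if $x_0 \neq x_1$ were both global maxima of $G$ with common value $m$, then for any $\la \in (0,1)$ strict quasi-concavity would force $G(x_\la) > M_{-\infty}(m,m;\la) = m$, contradicting maximality. To make this last step airtight one needs $m > 0$, so that the relevant branch of $M_{-\infty}$ is $\min\{m,m\}$ and not $0$; this is supplied by Lemma \ref{continuity}(2), since $G$ is strictly power concave on all of $\R^n = \interi \R^n$ with exponent $r \in \R \cup \{-\infty\}$ and is therefore positive everywhere.

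There is essentially no obstacle in this argument; it is simply a matter of assembling results already in place. The only points demanding a word of care are the interpretation of the exponent $\ell/(1+n\ell)$ in the boundary case $\ell = -1/n$ and the positivity of the maximum value of $G$, both of which are handled as indicated above.
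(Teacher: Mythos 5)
Your argument is correct and is exactly the intended (and in the paper, omitted) deduction: Theorem \ref{thm_pconc} gives strict $\ell/(1+n\ell)$-concavity of $G$, the monotonicity \eqref{M_monotonicity} upgrades this to strict quasi-concavity, and a strictly quasi-concave function has at most one global maximum point. The detour through Lemma \ref{continuity} (2) to secure $m>0$ is harmless but unnecessary, since even when $m=0$ the definition \eqref{pth-mean} gives $M_{-\infty}(m,m;\la)=0=m$, so the strict inequality $G(x_\la)>m$ contradicts maximality in either case.
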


\subsection{Applications to concrete convolutions}

In this subsection, we show the strict parabolic power concavity and strict power concavity in space variable of the Gauss--Weierstrass integral \eqref{Weierstrass} and the Poisson integral \eqref{Poisson}. 
As applications of Theorem \ref{thm_alpconc}, the strict $1/2$-parabolic quasi-concavity of the Gauss--Weierstrass integral and the strict $1$-parabolic quasi-concavity of the Poisson integral are given.

\begin{ex}
Let $a \in \R \sm \{ 0 \}$, $b \in [ 1, +\infty )$, and $c \in \R$.
Suppose $c/a <0$.
Put 
\[
\kappa  (r,t) = t^a \exp \( -\frac{r^b}{t^c} \) ,\ 
(r,t) \in [0,+\infty ) \times (0,+\infty ) .
\]
Then, $\kappa$ satisfies the conditions (i)--(iii) in Proposition \ref{rsymm} with $\al =c/b$, $p = c/(ab)$ and $\tau =1$. 
Thus, the function
\[
\kappa^\circ (x,t) =    \kappa   \( \lvert x \rvert , t\) ,\
(x,t) \in \R^n \times (0,+\infty ) ,
\]
is almost-strictly $c/b$-parabolically $c/(ab)$-concave on $\R^n \times (0,+\infty)$.
In particular, applying this investigation with $a=-n/2$, $b=2$ and $c=1$, Remark \ref{homo-alpconc} (2) guarantees that the Gauss--Weierstrass kernel
\[
\R^n \times (0,+\infty )  \ni (x,t) \mapsto \frac{1}{(4 \pi t )^{n /2}} \exp \( -\frac{\vert x \vert^2}{4t} \) 
\]
is almost-strictly $1/2$-parabolically $-1/n$-concave on $\R^n \times (0,+\infty)$. 
\end{ex}

\begin{prop}\label{alpconc_W}
Let $\Ome$ be a bounded convex set in $\R^n$ with non-empty interior.
Let $W$ be as in \eqref{Weierstrass}.
$W \chi_\Ome$ is strictly $1/2$-parabolically quasi-concave on $\R^n \times (0,+\infty )$.
\end{prop}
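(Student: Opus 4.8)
The plan is to apply the main theorem (Theorem \ref{thm_alpconc}) directly, with $\al = 1/2$, interval $I = (0,+\infty)$, the function $\f$ taken to be the Gauss--Weierstrass kernel $\f(x,t) = (4\pi t)^{-n/2}\exp\(-|x|^2/(4t)\)$, and $\psi = \chi_\Ome$. With these choices the function $\Ga$ produced by Theorem \ref{thm_alpconc} is precisely $W\chi_\Ome$, by the definition \eqref{Weierstrass}. Along the way I would note that $\Ga(x,t) = \int_\Ome \f(x-y,t)\,\d y$ is finite for every $(x,t) \in \R^n \times (0,+\infty)$, since $\Ome$ is bounded and $\f(\cdot,t)$ is bounded and continuous, so that Theorem \ref{thm_alpconc} genuinely applies.

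Next I would check the four hypotheses of Theorem \ref{thm_alpconc}. Condition (i) holds with $p = -1/n \in \R$: by the Example preceding this proposition (the case $a = -n/2$, $b = 2$, $c = 1$, together with Remark \ref{homo-alpconc} (2)), the Gauss--Weierstrass kernel is almost-strictly $1/2$-parabolically $(-1/n)$-concave on $\R^n \times (0,+\infty)$. Condition (ii) holds with $q = +\infty$: since $\Ome$ is convex, Remark \ref{infty-concave} (1) (with $c = 1$) gives that $\chi_\Ome$ is $(+\infty)$-concave on $\R^n$. Condition (iii) holds because $\R^n \sm \chi_\Ome^{-1}(0) = \Ome$ is bounded with non-empty interior by hypothesis. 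For condition (iv), with $p = -1/n$ and $q = +\infty$ we have $p + q = +\infty \geq 0$, and from \eqref{Lemma10.1ell} the associated exponent is $\ell = pq/(p+q) = -1/n$ (the limiting value as $q \to +\infty$), so $\ell \geq -1/n$.

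Theorem \ref{thm_alpconc} then yields that $\Ga = W\chi_\Ome$ is strictly $1/2$-parabolically $\ell/(1+n\ell)$-concave on $\R^n \times (0,+\infty)$. Since $\ell = -1/n$, the convention fixed in the Introduction gives $\ell/(1+n\ell) = -\infty$, and strict $1/2$-parabolic $(-\infty)$-concavity is exactly strict $1/2$-parabolic quasi-concavity; this is the assertion. There is no genuine obstacle in this argument: the only point requiring a moment's care is the edge case $\ell = -1/n$ in the exponent arithmetic, where the admissible bound in hypothesis (iv) is attained with equality and the output exponent $\ell/(1+n\ell)$ must be read as $-\infty$.
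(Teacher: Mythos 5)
Your proposal is correct and is precisely the argument the paper intends (the proof is left implicit there, with the preceding Example supplying hypothesis (i)): apply Theorem \ref{thm_alpconc} with $\al=1/2$, $p=-1/n$, $q=+\infty$, so $\ell=-1/n$ and $\ell/(1+n\ell)=-\infty$ by the stated convention. The edge-case bookkeeping you flag ($p+q=+\infty\geq 0$, $\ell=-1/n$ attained with equality) is handled exactly as the paper's conventions prescribe.
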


\begin{ex}
Let $a \in [0,+\infty )$, $b \in [1, +\infty)$, and $c \in (-\infty , 0  )$.
Suppose $(a,b) \neq (0,1)$ and $c < -a$. 
Put 
\[
\kappa   (r,t) = t^a \( r^b + t^b \)^{c/b} ,\
(r,t) \in [0,+\infty ) \times (0,+\infty ) .
\]
Then, $\kappa$ satisfies the conditions (i)--(iii) in Proposition \ref{rsymm} with $\al =1$, $p = 1/(a+c)$ and $\tau =1$. 
Thus, the function
\[
\kappa^\circ (x,t) = \kappa   \( \lvert x \rvert , t    \) ,\
(x,t) \in \R^n \times (0,+\infty ) ,
\]
is almost-strictly $1$-parabolically $1/(a+c)$-concave on $\R^n \times (0,+\infty)$.
In particular, applying this investigation with $a=1$, $b=2$ and $c=-(n+1)$, the Poisson kernel
\[
\R^n \times (0,+\infty )  \ni (x,t) \mapsto \frac{2t}{\sigma_n \( S^n \)} \( \vert x \vert^2 + t^2 \)^{-(n+1)/2} 
\]
is almost-strictly $1$-parabolically $-1/n$-concave on $\R^n \times (0,+\infty)$. 
\end{ex}

\begin{prop}\label{alpconc_P}
Let $\Ome$ be a bounded convex set in $\R^n$ with non-empty interior.
Let $P$ be as in \eqref{Poisson}.
$P \chi_\Ome$ is strictly $1$-parabolically quasi-concave on $\R^n \times (0,+\infty )$.
\end{prop}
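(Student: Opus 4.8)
The plan is to deduce Proposition \ref{alpconc_P} directly from Theorem \ref{thm_alpconc}, in exactly the same way Proposition \ref{alpconc_W} is obtained (which I would expect to appear just before this, with the same structure). First I would recall from the preceding Example that the Poisson kernel
\[
K(x,t) = \frac{2t}{\sigma_n\(S^n\)}\(\vert x\vert^2 + t^2\)^{-(n+1)/2}
\]
is almost-strictly $1$-parabolically $-1/n$-concave on $\R^n \times (0,+\infty)$. So I take $\al = 1$, $\f = K$, $p = -1/n$, and $\psi = \chi_\Ome$. The function $\psi = \chi_\Ome$ is $+\infty$-concave on $\R^n$ by Remark \ref{infty-concave} (1), so I set $q = +\infty$. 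Then $\R^n \sm \psi^{-1}(0) = \Ome$ is bounded with non-empty interior by hypothesis, giving condition (iii). For condition (iv): $p + q = -1/n + \infty = +\infty \geq 0$, and $\ell = pq/(p+q)$ with $(p,q) = (-1/n, +\infty)$ — here I need to check from \eqref{Lemma10.1ell} that $\ell = p = -1/n$ when $q = +\infty$ (the limiting value $pq/(p+q) \to p$ as $q \to +\infty$), so $\ell = -1/n \geq -1/n$, and condition (iv) holds.

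With all four hypotheses verified, Theorem \ref{thm_alpconc} applies with $I = (0,+\infty)$ and yields that
\[
\Ga(x,t) = K(\cdot,t) \ast \chi_\Ome(x) = P\chi_\Ome(x,t)
\]
is strictly $1$-parabolically $\ell/(1+n\ell)$-concave on $\R^n \times (0,+\infty)$. Since $\ell = -1/n$, we have $1 + n\ell = 0$, and by the convention stated after \eqref{G}, $\ell/(1+n\ell) = -\infty$. Thus $P\chi_\Ome$ is strictly $1$-parabolically $(-\infty)$-concave, i.e., strictly $1$-parabolically quasi-concave, on $\R^n \times (0,+\infty)$, which is the assertion.

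I do not anticipate a genuine obstacle here — the proof is a bookkeeping exercise in matching parameters, and all the real work has been done in Theorem \ref{thm_alpconc} and in the Example computing the power concavity class of the Poisson kernel. The one point demanding a moment's care is the arithmetic with $q = +\infty$: one must confirm that $\ell$ evaluates to $-1/n$ (not to something smaller) in the degenerate case via \eqref{Lemma10.1ell}, and then that $\ell/(1+n\ell)$ is interpreted as $-\infty$ at the boundary value $\ell = -1/n$. Both are exactly the conventions fixed in the text, so the argument closes cleanly. (An alternative, slightly more self-contained route would avoid $q = +\infty$ by instead noting that $\chi_\Ome$ is $q$-concave for every finite $q \geq 1$ as well — but since $p + q = -1/n + q$ must be $\geq 0$ one would need $q \geq 1/n$, and then $\ell = pq/(p+q) = -q/(nq-1)$, which only tends to $-1/n$ as $q \to +\infty$; so the $q = +\infty$ choice is the natural one and I would use it.)
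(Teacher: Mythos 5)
Your proof is correct and follows exactly the route the paper intends: the Example immediately preceding the proposition supplies the almost-strict $1$-parabolic $-1/n$-concavity of the Poisson kernel, and Theorem \ref{thm_alpconc} with $p=-1/n$, $q=+\infty$, $\ell=-1/n$, $\ell/(1+n\ell)=-\infty$ gives the claim. Your parameter bookkeeping, including the conventions for $q=+\infty$ and for $\ell=-1/n$, matches the paper's stated conventions.
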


As applications of Theorem \ref{thm_pconc},  the strict log-concavity in space variable of the Gauss--Weierstrass integral \eqref{Weierstrass} and the strict power concavity in space variable of the Poisson integral \eqref{Poisson} are given.

\begin{ex}
Let $t \in (0,+\infty)$, $b \in (1, +\infty)$ and $c \in \R$.
Put 
\[
k_t  (r ) =  \exp \( -\frac{r^b}{t^c} \) ,\ 
r \in [0,+\infty ) .
\]
Then, $k_t$ is strictly log-concave on $[0,+\infty )$ and strictly decreasing on $[0,+\infty)$.
Thus, the function 
\[
k_t^\circ ( x ) = k_t \( \vert x \vert \)  ,\ 
x \in \R^n ,
\]
is strictly log-concave on $\R^n$.
In particular, applying this investigation with $b=2$ and $c=1$, the function 
\[
\R^n \ni x  \mapsto \frac{1}{(4 \pi t )^{n /2}} \exp \( -\frac{\vert x \vert^2}{4t} \) 
\]
is strictly log-concave on $\R^n$. 
\end{ex}

\begin{prop}\label{pconc_W}
Let $g$ be a non-negative function defined on $\R^n$, and $q \in \R \cup \{ + \infty \}$.
Assume that the following conditions are satisfied:
\begin{enumerate}[{\rm (i)}]
\item $g$ is $q$-concave on $\R^n$.

\item $\R^n \sm g^{-1} (0)$ is bounded, and its interior is not empty.

\item $q \geq 0$.
\end{enumerate}
Let $W$ be as in \eqref{Weierstrass}.
For any $t \in (0,+\infty)$, the function $Wg (\cdot ,t) : \R^n \to (0,+\infty)$ is strictly log-concave on $\R^n$.
\end{prop}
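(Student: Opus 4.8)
The plan is to deduce Proposition \ref{pconc_W} directly from Theorem \ref{thm_pconc}, following exactly the same pattern used to derive the parabolic statements (Propositions \ref{alpconc_W}, \ref{alpconc_P}) from Theorem \ref{thm_alpconc}. First I would take $f$ to be the Gauss--Weierstrass kernel at fixed time $t$, namely $f(x) = (4\pi t)^{-n/2}\exp(-|x|^2/(4t))$, and $g$ the given function. The Example immediately preceding this proposition shows that this $f$ is strictly log-concave on $\R^n$, so it is strictly $0$-concave; this verifies hypothesis (i) of Theorem \ref{thm_pconc} with $p=0$. Hypotheses (ii) and (iii) of Theorem \ref{thm_pconc} are literally hypotheses (i) and (ii) of the proposition. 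For hypothesis (iv) I would set $p=0$ and use the assumption $q\ge 0$: then $p+q = q \ge 0$, and by the formula \eqref{Lemma10.1ell} with $(p,q)=(0,q)$, $q\neq 0$ gives $\ell = -\infty$...

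Wait — I need to be careful here, because $\ell$ is defined via \eqref{Lemma10.1ell}: for $p=0$ and $q>0$ one has $p+q\ne 0$, so $\ell = pq/(p+q) = 0$. For $p=q=0$ one has $\ell = 0$ as well, and for $q=+\infty$ with $p=0$ one gets $\ell = 0$ (interpreting $0\cdot\infty/(0+\infty)$ via the stated conventions $+\infty+(-\infty)=0$ and taking the limiting value $0$). So in all cases $\ell = 0$, hence $\ell \ge -1/n$ holds and $\ell/(1+n\ell) = 0$. Thus all four hypotheses of Theorem \ref{thm_pconc} are satisfied, and we conclude that $G = f\ast g$ is strictly $0$-concave, i.e. strictly log-concave, on $\R^n$. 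Since $G(x) = Wg(x,t)$ by the definition \eqref{Weierstrass}, this is exactly the assertion of the proposition for the fixed $t$; as $t\in(0,+\infty)$ was arbitrary, we are done. I would also note in passing that $Wg(\cdot,t)$ is genuinely positive-valued, as required by the statement "$\R^n\to(0,+\infty)$": this follows because $f$ is everywhere positive and $g$ is non-negative with non-empty support, so the integral is strictly positive everywhere — and in any case Lemma \ref{continuity}(2) forces positivity once strict log-concavity on the (open) domain $\R^n$ is established.

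The only genuine subtlety, and the one step I would spell out carefully, is the bookkeeping for $\ell$ when $q=+\infty$ (the case where $g$ is the characteristic function of a convex body, which is explicitly within scope since such $g$ is $+\infty$-concave and its support is a convex body). One must check that the conventions accompanying \eqref{Lemma10.1ell} indeed yield $\ell=0$, equivalently that $\ell/(1+n\ell)=0$, so that the output exponent of Theorem \ref{thm_pconc} is the claimed $0$. Everything else is a direct citation: strict $p$-concavity of the kernel from the preceding Example, and the hypotheses of the proposition feeding verbatim into (ii)--(iii) of Theorem \ref{thm_pconc}. I do not anticipate any obstacle beyond this verification of the exponent arithmetic; there is no new inequality to prove.
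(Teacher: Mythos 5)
Your proposal is correct and is exactly the argument the paper intends: the Example immediately preceding Proposition \ref{pconc_W} supplies the strict log-concavity of the kernel $x \mapsto (4\pi t)^{-n/2}\exp(-\vert x \vert^2/(4t))$ at each fixed $t$, and the proposition then follows by applying Theorem \ref{thm_pconc} with $p=0$, for which $\ell = 0$ in every case $q>0$, $q=0$, $q=+\infty$ (the last via the limiting convention in \eqref{Lemma10.1ell}), so that the output exponent $\ell/(1+n\ell)=0$. Your care over the $q=+\infty$ bookkeeping and the positivity remark are both sound and consistent with the paper.
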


\begin{ex}
Let $t \in (0,+\infty)$, $b \in ( 1 ,+\infty)$, and $c \in (-\infty , 0)$.
Put 
\[
k_t (r ) = \( r^b + t^b \)^{c/b} ,\ 
r \in [0,+\infty ) .
\]
Then, $k_t$ is strictly $1/c$-concave on $[0,+\infty )$ and strictly decreasing on $[0,+\infty)$.
Thus, the function
\[
k_t^\circ  ( x )  =   k_t \( \vert x \vert \)  ,\
x \in \R^n ,
\]
is strictly $1/c$-concave on $\R^n$.
In particular, applying this investigation with $b=2$ and $c=-(n+1)$, the function
\[
\R^n \ni x \mapsto \frac{2t}{\sigma_n \( S^n \)} \( \vert x \vert^2 + t^2 \)^{-(n+1)/2} 
\]
is strictly $-1/(n+1)$-concave on $\R^n$. 
\end{ex}

\begin{prop}\label{pconc_P}
Let $g$ be a non-negative function defined on $\R^n$, and $q \in \R \cup \{ + \infty \}$.
Assume that the following conditions are satisfied:
\begin{enumerate}[{\rm (i)}]
\item $g$ is $q$-concave on $\R^n$.

\item $\R^n \sm g^{-1} (0)$ is bounded, and its interior is not empty.

\item $q \geq 1$.
\end{enumerate}
Let $P$ be as in \eqref{Poisson}.
For any $t \in (0,+\infty)$, the function $Pg (\cdot ,t ) : \R^n \to (0,+\infty )$ is strictly $q/(1-q)$-concave on $\R^n$.
\end{prop}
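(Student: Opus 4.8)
The plan is to deduce Proposition~\ref{pconc_P} directly from Theorem~\ref{thm_pconc}, taking $f$ to be the Poisson kernel at the fixed time. First I would fix $t \in (0,+\infty)$ and put
\[
f(x) = \frac{2t}{\sigma_n\(S^n\)}\(\vert x\vert^2 + t^2\)^{-(n+1)/2},\qquad x \in \R^n ,
\]
so that, by the Example immediately preceding this proposition (the case $b=2$, $c=-(n+1)$), $f$ is a non-negative measurable function that is strictly $p$-concave on $\R^n$ with $p = -1/(n+1) \in \R$. With this choice, hypotheses~(i) and~(ii) of Theorem~\ref{thm_pconc} are exactly our assumption on $f$ and our assumption~(i) on $g$, while hypothesis~(iii) of Theorem~\ref{thm_pconc} is our assumption~(ii); note that assumption~(ii) also forces $g$ to be bounded, so that $f \ast g$ is finite at every point.

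Next I would check hypothesis~(iv) of Theorem~\ref{thm_pconc}. Since $q \ge 1 > 1/(n+1) = -p$, we have $p+q \ge n/(n+1) > 0$; in particular $p+q \ne 0$, so (for finite $q$) the exponent in~\eqref{Lemma10.1ell} is
\[
\ell = \frac{pq}{p+q} = \frac{-q}{q(n+1)-1} ,
\]
and, the denominator being positive, $\ell \ge -1/n$ is equivalent to $nq \le q(n+1)-1$, i.e.\ to $q \ge 1$. For $q = +\infty$ the same inequality holds under the conventions of Theorem~\ref{thm_pconc} (letting $q \to +\infty$ gives $\ell = p = -1/(n+1) > -1/n$). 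Hence all of~(i)--(iv) of Theorem~\ref{thm_pconc} hold.

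Finally, Theorem~\ref{thm_pconc} gives that $G = f \ast g$ is strictly $\ell/(1+n\ell)$-concave on $\R^n$, and $G(x) = Pg(x,t)$ by~\eqref{Poisson}. It remains to simplify the exponent: from
\[
1 + n\ell = \frac{q-1}{q(n+1)-1} , \qquad \frac{\ell}{1+n\ell} = \frac{-q}{q-1} = \frac{q}{1-q}
\]
(the value being $-1$ when $q = +\infty$, and $\ell = -1/n$, hence $\ell/(1+n\ell) = -\infty$, when $q = 1$), we conclude that $Pg(\cdot,t)$ is strictly $q/(1-q)$-concave on $\R^n$; since $t$ was arbitrary, this proves the proposition. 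I do not anticipate a genuine obstacle here: the argument is a straightforward specialization of Theorem~\ref{thm_pconc}, and the only points requiring care are the bookkeeping of the indices $\ell$ and $\ell/(1+n\ell)$ under the conventions in~\eqref{Lemma10.1ell} and the two boundary cases $q = 1$ and $q = +\infty$ (in the latter, $g$ is a positive multiple of $\chi_\Omega$ with $\Omega = \R^n \sm g^{-1}(0)$ by Remark~\ref{infty-concave}).
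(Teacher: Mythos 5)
Your proposal is correct and is exactly the argument the paper intends: the Example immediately preceding the proposition supplies the strict $-1/(n+1)$-concavity of the Poisson kernel at fixed $t$, and the proposition is then the specialization of Theorem \ref{thm_pconc} with $p=-1/(n+1)$, including your index bookkeeping $\ell = -q/(q(n+1)-1)$, the equivalence of $\ell \ge -1/n$ with $q\ge 1$, and $\ell/(1+n\ell)=q/(1-q)$ (with the boundary cases $q=1$ and $q=+\infty$ handled as you describe).
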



\section*{Acknowledgements}

The authors would like to express their gratitude to Professor Paolo Salani for encouraging this research.

The first-named author is partially supported by JSPS Kakenhi (No. 19K03462).
The second-named author is partially supported by JSPS Kakenhi (No. 20K14320) and funds (No.
205004) from the Central Research Institute of Fukuoka University.

\no
Jun O'HARA\\
Department of Mathematics and Informatics, \\
Faculty of Science, \\
Chiba University, \\
1-33 Yayoi-cho, Inage, Chiba, 263-8522, Japan\\
E-mail: ohara@math.s.chiba-u.ac.jp\\

\no
Shigehiro SAKATA\\
Department of Applied Mathematics,\\
Faculty of Science,\\
Fukuoka University,\\
8-19-1 Nanakuma, Jonan, Fukuoka, 814-0180, Japan\\
E-mail: ssakata@fukuoka-u.ac.jp

\end{document}